\numberwithin{equation}{section}
\theoremstyle{plain}
\newtheorem{theorem}[equation]{Theorem}
\newtheorem{lemma}[equation]{Lemma}
\newtheorem{proposition}[equation]{Proposition}
\newtheorem{corollary}[equation]{Corollary}
\theoremstyle{remark}
\newtheorem{remark}[equation]{Remark}
\theoremstyle{definition}
\newtheorem{definition}[equation]{Definition}
\newtheorem{example}[equation]{Example}
\newcommand{\bP}{\mathbb{P}}
\newcommand{\bQ}{\mathbb{Q}}
\newcommand{\bZ}{\mathbb{Z}}
\newcommand{\bC}{\mathbb{C}}
\newcommand{\bL}{\mathbb{L}}
\newcommand{\calC}{\mathcal{C}}
\newcommand{\calO}{\mathcal{O}}
\newcommand{\calJ}{\mathcal{J}}
\newcommand{\Aut}{\mathrm{Aut}}
\newcommand{\Bir}{\mathrm{Bir}}
\newcommand{\Pic}{\mathrm{Pic}}
\newcommand{\git}{/\kern-0.2em/}
\newcommand{\Lneg}{L_{-}}
\newcommand{\OG}{\Lambda}
\newcommand{\leech}{\mathbb{L}}
\newcommand{\markman}{\mathcal{W}^{pex}_{OG10}}
\DeclareMathOperator{\rank}{\mathrm{rank}}
 \title[]{Classification of Symplectic Birational Involutions of manifolds of $OG10$ type} 
\author[L. Marquand]{Lisa Marquand}
\address{
	Courant Institute,
	251 Mercer Street,
	New York, NY 10012, USA
}
\email{lisa.marquand@nyu.edu}
\author[S. Muller]{Stevell Muller}
\address{Mathematik und Informatik
	Geb\"aude E.2.4 - Raum: 221
	Universit\"at des Saarlandes
	66123 Saarbrücken (Germany)}
\email{muller@math.uni-sb.de}
\subjclass[2020]{14J42, 14E07, 14J70, 14J50}
\keywords{Hyperk\"ahler manifolds, cubic fourfold, symplectic involutions, birational transformations}
\begin{document}
	\bibliographystyle{alpha}
	
	\begin{abstract}
		We give a complete classification of symplectic birational involutions of manifolds of $OG10$ type. We approach this classification with three techniques --- via involutions of the Leech lattice, via involutions of cubic fourfolds and finally lattice enumeration via a modified Kneser's neighbour algorithm. The classification consists of three involutions with an explicit geometric realisation via cubic fourfolds, and three exceptional involutions which cannot be obtained by any known construction. 
	\end{abstract}
 \maketitle
	\section{Introduction}
The classification of symplectic automorphisms of irreducible holomorphic symplectic manifolds has been widely studied. In his celebrated paper \cite{MR958597}, Mukai classified symplectic automorphisms of a $K3$ surface. This was further streamlined by Kond\=o \cite{MR1620514}, who related automorphisms of $K3$ surfaces to automorphisms of the Niemeier lattices. In recent years, there has been intense work on classifying symplectic automorphisms of higher dimensional irreducible holomorphic symplectic manifolds. Using a similar approach to Kond\=o, Mongardi obtained a classification of prime order symplectic automorphisms of manifolds of $K3^{[n]}$ type \cite{MR3102529, MR3473636}. Similar results were obtained by Huybrechts \cite{huybrechtsderived}. A classification of symplectic automorphisms of manifolds of $OG6$ type was obtained in \cite{grossi2020finite}. In contrast, manifolds of $OG10$ type admit no regular symplectic automorphisms, as shown recently in \cite{OG10rigid}. For a $K3$ surface, the group of automorphisms and the group of birational transformations coincide; in higher dimensions this is no longer the case. One can consider instead the group of symplectic birational transformations of an irreducible holomorphic symplectic manifold.

In this paper, we investigate symplectic birational involutions of manifolds deformation equivalent to O'Grady's ten dimensional exceptional example ($OG10$ type) \cite{MR1703077}. Our interest is motivated by our desire to study the fixed loci; each irreducible component inherits an induced holomorphic symplectic structure. In the case of involutions, the associated moduli space of $OG10$ manifolds with an involution of a given type is a type IV period domain. In this way we obtain variations of Hodge structures of $K3$ type. This does not occur for higher order cyclic groups (see \cite{LPZ,MR4190414}).

Let $X$ be a manifold of $OG10$ type, and let $f\in \Bir(X)$ be a symplectic birational involution. We obtain an induced involution on the second cohomology $\OG:=H^2(X,\bZ)$, determining two lattices $\OG_+,\, \OG_-$, the invariant and the coinvariant lattices respectively. Vice versa, specifying such lattices (subject to certain lattice theoretic conditions) determines a symplectic birational transformation of some manifold of $OG10$ type via the Global Torelli Theorem (Theorem \ref{thm: no markman so sympl}). Our main theorem is a classification of symplectic birational involutions of manifolds of $OG10$ type. We classify them by distinguishing their induced action on the discriminant group $A_\Lambda:=\Lambda^*/\Lambda$. In the column labelled \textit{Geo.} of Table \ref{main table}, we indicate whether we can geometrically realise the involution via known constructions of $OG10$ type manifolds.

\begin{theorem}\label{main thm}
Let $X$ be a manifold of $OG10$ type, $f\in\Bir(X)$ a symplectic birational involution. Then the induced action $\iota$ on $\Lambda:=H^2(X,\bZ)$ is determined by one of the cases (1)-(6) below.

\begin{center}
			\vspace{-1.5mm}
   \small\centering\setlength{\tabcolsep}{4pt}
\renewcommand\arraystretch{1.5}
			\begin{table}[h!]
				\begin{tabular}{llllll}
					\toprule
					  & $\OG_+$ & $\OG_-$ & $\rank(\OG_-)$ & Action on $A_\Lambda$ & Geo.\\ 
					\toprule
					(1) & $U^3\oplus D_4^3(-1)$ & $E_6(-2)$ & $6$ & nontrivial & yes\\
                    (2) & $U^3\oplus A_2(-1)\oplus E_8(-2)$ & $E_8(-2)$ & $8$ & trivial & yes\\
                    (3) & $U^2\oplus A_1\oplus A_1(-1)\oplus E_8(-2)$ & $M(-1)$ & $10$ & nontrivial & yes\\
                    (4) & $U^2(2)\oplus A_1\oplus A_1(-1)\oplus E_6(-2)$ & $D_{12}^+(-2)$ & $12$ & trivial & no\\
                    (5) & $\langle 2\rangle^3\oplus\langle -2\rangle^9$ & $G_{12}$ & $12$ & nontrivial & no\\
                    (6) & $\langle 2\rangle^3\oplus\langle -2\rangle^5$ & $G_{16}$ & $16$ & nontrivial & no\\
					\bottomrule
				\end{tabular}
        \caption{Classification of birational involutions of manifolds of $OG10$ type}\label{main table}
			\end{table}
			\vspace*{-\baselineskip}
		\end{center}
  Here $M$ is the unique index two overlattice of $D_9(2)\oplus\langle 24\rangle$ (see \cite{marquand2022cubic} for an explicit description).
  
   Moreover, each involution of $\Lambda$ listed above is unique up to conjugacy in $O(\OG)$, and there exists a manifold of $OG10$ type with a birational involution inducing such an isometry.
  
\end{theorem}

The proof of Theorem \ref{main thm} is divided into three cases (Theorems \ref{trivial on AL} , \ref{partial class}, and \ref{non-trivial non split}). 
First, in Theorem \ref{trivial on AL}, we classify symplectic birational involutions acting trivially on the discriminant group by using the same techniques of \cite{huybrechtsderived,MR3473636}; we relate these involutions to involutions of the Leech lattice. 
This recovers the Nikulin type involution with coinvariant lattice $E_8(-2)$, and more interestingly, we obtain an involution with coinvariant lattice $D_{12}^+(-2)$ that cannot be realised in the case of $K3$ surfaces. Thus we obtain the involutions labelled (2) and (4) in Table \ref{main table}. 

Secondly, in Theorem \ref{partial class}, we classify symplectic birational involutions acting nontrivially on the discriminant group whose coinvariant lattice has $\rank(\OG_-) <12$. In this case, we reduce the classification to the classification of involutions of cubic fourfolds. More precisely, under the rank assumption such an involution induces an involution on a smaller lattice, which can be identified with the primitive middle cohomology of some cubic fourfold via the Torelli theorem of Voisin \cite{voisintorelli}. The coinvariant lattice $\OG_-$ can then be identified with the invariant lattice of an antisymplectic involution of the cubic fourfold, which have been classified (see \cite{LPZ,marquand2022cubic}). The only possibilities are the involutions labelled (1) and (3) in Table \ref{main table}.

Thirdly, in Theorem \ref{non-trivial non split}, we classify symplectic birational involutions that act nontrivially on the discriminant group, with $\rank(\OG_-)\geq 12$. 
With this hypothesis, it is fairly easy to identify the possible lattices $\OG_+$; however, the corresponding lattices $\OG_-$ are not unique in their genera. 
There are 12 possible genera for $\OG_-$; we first enumerate the possible lattices in each genus, and then exclude all but two possibilities, obtaining the involutions labelled (5) and (6) in Table \ref{main table}.
The results of this enumeration are displayed in Table \ref{table enum}. 
We accomplish this enumeration using a modification of Kneser's neighbour method \cite{KneserMartin2002QF}; this is computer aided (see Appendix \ref{appendix:A} for more details). 
Our enumeration results can be found in the database \cite{marquand_lisa_2023_7528193}.

In all three cases, we obtain involutions $\iota$ of the lattice $\Lambda$ acting as in cases (1)-(6). We conclude that there exists a manifold $X$ of type $OG10$ with a birational involution $f\in \Bir(X)$ inducing $\iota$ using a corollary of the Torelli Theorem (Theorem \ref{thm: no markman so sympl}). However, we are able to geometrically realise the involutions in cases (1)-(3) through known constructions, using the construction of \cite{LSV}.
More precisely,  using the observation of Sacc\`a \cite[\S 3.1]{sac2021birational} (see also \cite{LPZ}), an involution of a cubic fourfold induces a birational transformation of the corresponding compactified intermediate Jacobian $X$, a manifold of $OG10$ type by \cite{LSV,sac2021birational}, giving a geometric realisation of the involutions in cases (1)-(3). 

\subsection*{Outline} We recall the relevant definitions and previous results in Section \ref{prelims}. In Section \ref{sec: trivial act} we consider symplectic birational involutions acting trivially on the discriminant group of $\OG$ (Theorem \ref{trivial on AL}), and obtain the involutions (2) and (4). In Section \ref{sec: non-trivial <12}, we obtain a classification of symplectic birational involutions that act nontrivially on the discriminant group of $\Lambda$ and satisfy $\rank(\OG_-)<12$ (Theorem \ref{partial class}), and obtain the involutions (1) and (3). In Section \ref{sec:geo} we geometrically realise the involutions (1)-(3) via the construction of \cite{LSV}. In Section \ref{sec: non-trivial > 12}, we obtain a classification of symplectic birational involutions acting nontrivially on the discriminant group of $\OG$ but have $\rank(\OG_-)\geq 12$ (Theorem \ref{non-trivial non split}), and obtain the involutions (5) and (6). This concludes the proof of Theorem \ref{main thm}. Details of the enumeration algorithm required for the proof of Theorem \ref{non-trivial non split} can be found in Appendix \ref{appendix:A}.

\subsection*{Notations}
Throughout, all lattices are assumed to be integral and even unless stated otherwise. Let $L$ be a lattice and $\iota\in O(L)$ an isometry of order 2. We denote by $L_+$ and $L_{-}$ the invariant and coinvariant lattices defined by $\iota$ respectively. For any lattice $L$, we denote by $L(n)$ the lattice with the bilinear form scaled by $n\in\bZ_{\neq0}$. The discriminant group of a lattice is denoted by $A_L=L^*/L$. We denote by $\textnormal{div}_L(v)=\max\{\lambda\in \bZ\mid v\cdot L\subset \lambda \bZ\}$ the divisibility of an element $v\in L$. We assume all $ADE$ lattices are positive definite. We denote by $\leech$ the Leech lattice, which is the unique even negative definite unimodular lattice without roots. A 2-elementary lattice $L$ is determined by its signature, and the invariants $\delta_L$ and $l(A_L)$. Here $\delta_L\in\{0,1\}$ with $\delta_L=0$ if and only if $q_{A_L}$ takes values in $\bZ/2\bZ$, and $l(A_L)$ is the minimum number of generators of $A_L$.

\subsection*{Acknowledgments} The first author would like to thank her advisor, Radu Laza, for suggesting to her this problem as well as for the invaluable advice and suggestions. She would also like to thank Giulia Sacc\`a for useful discussions, as well as Giovanni Mongardi, Annalisa Grossi, Luca Giovenzana, Claudio Onorati and Davide Veniani for their comments and suggestions on the previous version. The second author would like to thank his advisor, Simon Brandhorst, for the precious help provided regarding the work mentioned in the appendix. Finally, we thank the referees for their feedback and suggestions that improved the quality of this manuscript.

The first author was partially supported by NSF Grant DMS-2101640 (PI Laza). The second author was supported by the Deutsche Forschungsgemeinschaft (DFG, German Research Foundation) [Gefördert durch die Deutsche Forschungsgemeinschaft (DFG) – Projektnummer 286237555 – TRR 195].

\section{Preliminaries}\label{prelims}
In this section we collect preliminary results. In Section \ref{subsec: OG10} we recall the definition of manifolds of $OG10$ type. 
In Section \ref{subsec: bir}, we recall a criteria for  when an isometry of the lattice $\OG$ is induced by a birational involution, based on the Torelli theorem.
In Section \ref{subsec: bir inv} we give further necessary conditions on the existence of induced isometry depending on the action on the discriminant group of $\OG$. 
We recall the relevant results on cubic fourfolds and the construction of \cite{LSV} and \cite{sac2021birational} in Section \ref{cubic prelims}.

\subsection{Manifolds of \texorpdfstring{$OG10$}{OG10} type}\label{subsec: OG10}
An \textbf{irreducible holomorphic symplectic} manifold is a simply connected, compact, K\"ahler manifold $X$ such that $H^0(X, \Omega^2_X)$ is generated by a nondegenerate holomorphic 2-form $\sigma$. 
Let $X$ be an irreducible holomorphic symplectic manifold that is deformation equivalent to O'Grady's 10-dimensional exceptional example \cite{MR1703077}. 
Then we say $X$ is a manifold of $OG10$ type.

It follows from the definition of irreducible holomorphic symplectic manifolds that $H^2(X,\bZ)$ is a torsion-free $\bZ$-module; equipped with the Beauville--Bogomolov--Fujiki form $q_X$ it becomes a lattice.
By \cite{MR2349768}, for $X$ a manifold of $OG10$ type there is an isometry $(H^2(X,\bZ),q_X)\cong \OG,$ where $$\OG:=U^3\oplus E_8(-1)^2\oplus A_2(-1).$$ 
A \textit{marking} is a choice of an isometry $\eta:H^2(X,\bZ)\rightarrow \OG$.
The purpose of this paper is to classify symplectic birational involutions of manifolds $X$ of $OG10$ type, in terms of their action on $H^2(X,\bZ)\cong \OG$. 
We always assume that a manifold of $OG10$ type has a fixed marking $\eta$ throughout.

\subsection{Torelli Theorem}\label{subsec: bir}
We denote by $ \Bir(X)$ the group of birational transformations of $X$ respectively. For an irreducible holomorphic symplectic manifold $X$, a birational transformation $f\in \Bir(X)$ is well defined in codimension one. We thus obtain an isometry $f^*:H^2(X,\bZ)\rightarrow H^2(X,\bZ)$.
\begin{definition}\label{defn: symplectic}
	We say a birational transformation $f\in \Bir(X)$ is \textbf{symplectic} if the induced action $f^*:H^2(X,\bC)\rightarrow H^2(X,\bC)$ acts trivially on $\sigma$. 
\end{definition}

Assume now that $X$ is a manifold of $OG10$ type, and consider the associated orthogonal representation
$$\eta_*:\Bir(X)\rightarrow O(\OG);\,\,\,\,\, f\mapsto \eta\circ (f^*)^{-1}\circ\eta^{-1}.$$
Note that $\eta_\ast$ is injective by  \cite[Theorem 3.1]{MR3592467}.

\begin{definition}
	An isometry $g\in O(\OG)$ is \textbf{induced} by a birational transformation if there exists a manifold $X$ of $OG10$ type and $f\in \Bir(X)$ such that $\eta_*(f)=g$.
\end{definition}

A birational transformation of $X$ preserves the birational K\"ahler cone $\mathcal{BK}(X)$. This in turn imposes restrictions on which involutions of the lattice $\OG$ are induced by birational involutions of such a manifold $X$. The structure of the birational K\"ahler cone for a manifold of $OG10$ type is now fully understood \cite{mongardi2020birational}.

In particular, the walls of $\overline{\mathcal{BK}(X)}$ are defined by the hyperplanes $D^\perp\subset \calC(X)$, where $D$ is a \textbf{stably prime exceptional divisor} \cite[\S 5]{markman}, and $\calC(X)$ denotes the connected component of the positive cone of $X$ containing a K\"ahler class. 
  We define the following set of vectors:
 \[\markman:=\{v\in\OG : v^2=-2\}\cup \{v\in \OG : v^2=-6,\; \textnormal{div}_{\OG}(v)=3\}.\]
  \begin{proposition}[{{{\cite[Proposition 3.1]{mongardi2020birational}}}}]
      Let $(X, \eta)$ be a marked manifold of $OG10$ type. Then $D\in\Pic(X)$ effective is stably prime exceptional if and only if $\eta(D)\in \markman.$
 \end{proposition}
  
  It follows that $\mathcal{BK}(X)$ is contained in an \textbf{exceptional chamber}; that is, a component of \[\mathcal{C}(X)\setminus \bigcup_{v\in \markman} v^\perp\](see \cite[Theorem 3.2]{mongardi2020birational}).
  Using this description, we can rephrase the Global Torelli theorem (due to Huybrechts, Markman and Verbitsky) in a way that is more suited for the study of symplectic birational transformations of $X$. This provides us with criteria for when an isometry $g\in O(\OG)$ is induced. 

	\begin{theorem}[{{{\cite[Theorems 2.15 and 2.17]{grossi2020finite}}}}]\label{thm: no markman so sympl}
	An involution $g\in O(\OG)$ is induced by a symplectic birational transformation if and only if $\OG_{-}$ is negative definite and 
	$$\OG_{-}\cap \markman=\varnothing.$$
\end{theorem}
\begin{proof}
    Although stated for manifolds of $OG6$ type, the proof of \cite[Theorems 2.15 and 2.17]{grossi2020finite} apply verbatim in the case of $OG10$ manifolds.
\end{proof}

\begin{remark}
    All birational symplectic transformations of a manifold of $OG10$ type are nonregular --- indeed, by \cite{OG10rigid}, a manifold of $OG10$ type has no nontrivial symplectic automorphisms of finite order. 
\end{remark}

\subsection{Discriminant action}\label{subsec: bir inv}
In order to classify symplectic birational involutions of manifolds of $OG10$ type, we consider two cases corresponding to the induced action of $\iota\in O(\OG)$ on the discriminant group $$A_{\OG}:=\OG^*/\OG\cong \bZ/3\bZ.$$ It follows that an involution acts by $\iota|_{A_{\OG}}=\pm \textnormal{id}_{A_{\OG}}$.
\begin{remark}
	Note that $A_{\OG}=A_{A_2(-1)}$; let $A_2(-1)$ be generated by $\alpha_1, \alpha_2$ where $\alpha_i^2=-2,$ and $\alpha_1\cdot \alpha_2=1$. Then $A_{A_2(-1)}\cong\bZ/3\bZ$ is generated by \[\gamma:=\left[\frac{2\alpha_1+\alpha_2}{3}\right]\] and $q_{A_2(-1)}(\gamma)= -\frac{2}{3}+2\bZ$. 
\end{remark}
\begin{proposition}\label{properties of Lneg}
	Let $X$ be a manifold of $OG10$ type, let $f\in \Bir(X)$ be a symplectic birational involution, and let $\iota=\eta_*(f)\in O(\OG)$ the induced isometry of $f$. Then $\OG_{-}$ is a negative definite lattice of rank $r\leq 21$, with $\OG_{-}\cap \markman=\varnothing$, and the following hold:
	\begin{enumerate}
		\item If $\iota$ acts trivially on $A_{\OG}$, then $\OG_{-}$ is a 2-elementary, negative definite lattice whose genus is determined by the invariants $(r,l(A_{\OG_{-}}),\delta_{\OG_{-}})$. 
		\item If $\iota$ acts by $-\textnormal{id}$ on $A_{\OG}$, then $\OG_+$ is a 2-elementary lattice with signature $(3,21-r)$.
	\end{enumerate}
\end{proposition}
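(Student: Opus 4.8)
The plan is to extract the structural constraints on $(\OG)_{-}$ from Lemma \ref{Lneg assumptions} and Theorem \ref{no markman so sympl}, and then bootstrap the lattice-theoretic consequences using the known structure $A_{\OG}\cong\bZ/3\bZ$. First, by Lemma \ref{Lneg assumptions} and Theorem \ref{no markman so sympl}, $(\OG)_{-}$ is negative definite and disjoint from $\markman$; the rank bound $r\le 21$ is immediate since $(\OG)_{-}$ is a negative definite primitive sublattice of $\OG$, which has signature $(3,21)$. The crux is then the case analysis on the induced action $\iota|_{A_{\OG}}=\pm\,\id$ (which exhausts all possibilities, since $\mathrm{Aut}(\bZ/3\bZ)=\{\pm 1\}$ as noted before the statement). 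The key general fact I would invoke is that for an involution $\iota$ of a lattice $L$, the discriminant forms of $L_+$ and $L_-$ are \textquotedblleft glued\textquotedblright\ along a common subquotient, and that $2L^*\subset L_+^*\oplus L_-^*$ inside $L\otimes\bQ$; more precisely, $L_+\oplus L_-\subset L$ is a finite-index sublattice whose quotient is $2$-elementary (killed by $2$, since $x-\iota(x)\in L_-$ and $x+\iota(x)\in L_+$ give $2x\in L_+\oplus L_-$ for every $x\in L$).

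For part (1), suppose $\iota$ acts trivially on $A_{\OG}\cong\bZ/3\bZ$. I would argue that the gluing between $A_{(\OG)_+}$ and $A_{(\OG)_-}$ respects this: the $3$-primary part of $A_{\OG}$ sits entirely on one side and contributes nothing $2$-torsion. Combining $2\,\OG^*\subset (\OG)_+\oplus(\OG)_-$ with the fact that the only prime dividing $|A_{\OG}|$ is $3$, one deduces that $A_{(\OG)_-}$ is annihilated by $2$, i.e. $(\OG)_-$ is $2$-elementary. (The cleanest route is: the odd part of the gluing is forced to be trivial on the coinvariant side because $\iota$ is $+\id$ on the $3$-part of $A_{\OG}$, so the $3$-torsion of $A_{\OG}$ is carried by $(\OG)_+$ alone, and then $A_{(\OG)_-}$ embeds in the $2$-elementary quotient $\OG/((\OG)_+\oplus(\OG)_-)$ up to the standard discriminant-form bookkeeping.) Once $(\OG)_-$ is known to be $2$-elementary and negative definite, Nikulin's classification of $2$-elementary lattices — recalled in the Notations as the statement that such a lattice is determined by its signature together with $(l(A_L),\delta_L)$ — gives that the genus is pinned down by $(r, l(A_{(\OG)_-}), \delta)$.

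For part (2), suppose $\iota$ acts as $-\id$ on $A_{\OG}\cong\bZ/3\bZ$. Here the $3$-torsion is \emph{anti}-invariant, so by the same gluing analysis the $3$-part of $A_{\OG}$ must be carried by $(\OG)_-$ rather than $(\OG)_+$; consequently $A_{(\OG)_+}$ has trivial $3$-part, and arguing as above its remaining torsion is $2$-elementary, so $(\OG)_+$ is a $2$-elementary lattice. Its signature is $(3,21)-(0,r)=(3,21-r)$ since $(\OG)_+$ has the same positive part as $\OG$ (because $(\OG)_-$ is negative definite, hence contributes only to the negative part) and $\rank(\OG)_-=r$. The main obstacle I anticipate is making the discriminant-group gluing argument fully rigorous: one must track how $A_{(\OG)_+}$ and $A_{(\OG)_-}$ are related to $A_{\OG}$ via the standard exact sequences for finite-index sublattices (Nikulin, \cite{MR525944}), and verify that the $\pm\id$ action on the $3$-part really does force the $3$-torsion onto one summand. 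This is where I would be most careful, though it is ultimately a routine application of Nikulin's machinery once set up correctly; everything else (negative definiteness, the rank bound, emptiness of the $\markman$-intersection, the invocation of the $2$-elementary classification) is immediate from the results already established in the excerpt.
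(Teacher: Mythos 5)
Your conclusions and overall plan are right, but the central mechanism you propose for $2$-elementarity does not work as stated. Two specific problems. First, the inclusion $2\,\OG^*\subset (\OG)_+\oplus(\OG)_-$ is false: since $A_{\OG}\cong\bZ/3\bZ$, multiplication by $2$ is an automorphism of $A_{\OG}$, so $2\,\OG^*$ is not even contained in $\OG$. (What is true, and trivial, is $\OG^*\subset (\OG)_+^*\oplus(\OG)_-^*$, which carries no information about $A_{(\OG)_-}$.) Second, the claim that ``$A_{(\OG)_-}$ embeds in the $2$-elementary quotient $\OG/((\OG)_+\oplus(\OG)_-)$'' has the containment backwards: the glue group $H:=\OG/((\OG)_+\oplus(\OG)_-)$ injects into $A_{(\OG)_-}$ via the projection, not the other way around, and in general $A_{(\OG)_-}$ is strictly larger. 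For instance, for $L=U\oplus E_8(2)$ with $\iota=\id_U\oplus(-\id)$, which is trivial on $A_L$, the glue group is trivial while $A_{L_-}\cong(\bZ/2\bZ)^8$. So $2$-elementarity of the quotient $H$ by itself proves nothing about $A_{(\OG)_-}$ (or $A_{(\OG)_+}$), and this is exactly the step carrying the content of the proposition. (Your observation that the $3$-torsion of $A_{\OG}$ must sit on the invariant side when $\iota|_{A_{\OG}}=\id$, and on the coinvariant side when $\iota|_{A_{\OG}}=-\id$, is correct and can be made rigorous, but it only controls the $3$-part, not the possible $\bZ/4\bZ$ or other torsion in the discriminant of the relevant sublattice.)

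The correct short argument uses the hypothesis on $A_{\OG}$ together with surjectivity of restriction to a primitive sublattice: since $(\OG)_\pm\subset\OG$ are primitive, the maps $\OG^*\to((\OG)_\pm)^*$, $w\mapsto p_\pm(w)$, are surjective, where $p_\pm$ denote the orthogonal projections. If $\iota$ is trivial on $A_{\OG}$, then for every $w\in\OG^*$ one has $w-\iota(w)\in\OG\cap((\OG)_-\otimes\bQ)=(\OG)_-$ and $w-\iota(w)=2p_-(w)$, so $2((\OG)_-)^*\subset(\OG)_-$ and $(\OG)_-$ is $2$-elementary; Nikulin's classification then pins down its genus by $(r,l(A_{(\OG)_-}),\delta)$. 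If $\iota=-\id$ on $A_{\OG}$, the same computation with $w+\iota(w)=2p_+(w)\in(\OG)_+$ shows $(\OG)_+$ is $2$-elementary, and the signature $(3,21-r)$ is immediate from negative definiteness of $(\OG)_-$, as you say. This is exactly the content of the lemma the paper cites for claim (1), and for claim (2) the paper takes an even quicker route you may prefer: apply case (1) to $\iota':=-\iota$, which acts trivially on $A_{\OG}$ and whose coinvariant lattice is $(\OG)_+$. The remaining parts of your write-up (negative definiteness, $(\OG)_-\cap\markman=\varnothing$, the rank bound, and the exhaustiveness of $\iota|_{A_{\OG}}=\pm\id$) are fine.
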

\begin{proof}
	The negative definiteness and the claim that $\OG_{-}\cap \markman=\varnothing$ follows from Theorem \ref{thm: no markman so sympl}. Claim $(1)$ follows by \cite[Lemma 2.8]{grossi2020finite}; for claim $(2)$ consider $\iota':=-\iota$; it follows that $\OG_+$ is 2-elementary.
\end{proof}
\subsection{Cubic Fourfolds}\label{cubic prelims}
Cubic fourfolds lead to irreducible holomorphic symplectic manifolds through various constructions, and one can study birational transformations induced by automorphisms of the cubic. This was first studied by Camere \cite{MR2967237} in her work on symplectic involutions of the Fano variety of lines, an irreducible holomorphic symplectic variety \cite{MR818549}. 

We use this idea for manifolds of $OG10$ type. Let $V\subset \bP^5$ be a smooth cubic fourfold, and let $\pi_U:J_U\rightarrow U\subset (\bP^5)^{\vee} $ be the Donagi--Markman fibration; i.e the family of intermediate Jacobians of the smooth hyperplane sections of $V$. The total space $J_U$ has a holomorphic symplectic form, by \cite{MR1397273}. The main result of \cite{LSV} is the construction, for a general $V$, of a smooth projective irreducible holomorphic symplectic compactification $\calJ_V$ of $J_U$, with a Lagrangian fibration $\pi:\calJ_V\rightarrow (\bP^5)^{\vee}$ extending $\pi_U$. It was shown that $\calJ_V$ is an irreducible holomorphic symplectic manifold of $OG10$ type. This result was extended to every smooth cubic fourfolds \cite{sac2021birational}, to obtain the following theorem.

\begin{theorem}[{{{\cite{LSV,sac2021birational}}}}]
	Let $V\subset \bP^5$ be a smooth cubic fourfold, and let $\pi_U:J_U\rightarrow U\subset (\bP^5)^{\vee}$ be the Donagi--Markman fibration. Then there exists a smooth projective irreducible symplectic compactification $\calJ_V$ of $J_U$ of $OG10$ type with a morphism $\pi:\calJ_V\rightarrow (\bP^5)^{\vee}$ extending $\pi_U$.
\end{theorem}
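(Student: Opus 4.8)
The plan is to reconstruct the Laza--Sacc\'a--Voisin construction together with Sacc\'a's subsequent extension to arbitrary smooth cubics. Over the open set $U\subset(\bP^5)^\vee$ of smooth hyperplane sections the situation is classical: $\pi_U\colon J_U\to U$ has as fibres the five-dimensional intermediate Jacobians $J(V\cap H)$, each carrying the principal polarization given by its theta divisor, and the total space $J_U$ is quasi-projective and holomorphic symplectic (Donagi--Markman; nondegeneracy of the natural $2$-form reflects the infinitesimal variation of Hodge structure of cubic threefolds, equivalently the fact that the fibres are Lagrangian). Everything therefore reduces to compactifying over the dual variety $\Delta=(\bP^5)^\vee\setminus U$ while keeping the total space smooth, the fibration morphism intact, and the symplectic form nondegenerate.

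The first step is to build a proper extension $\calJ_V\to(\bP^5)^\vee$ of $\pi_U$. The essential local input is the structure of the degenerate intermediate Jacobians: for a hyperplane section with a single node the (generalized) intermediate Jacobian is a semiabelian variety, an extension $1\to\bG_m\to J\to P\to 1$ of a four-dimensional principally polarized abelian variety $P$ (a Prym associated to the degenerating cubic threefold) by $\bG_m$, and the natural fibrewise compactification is the $\bP^1$-bundle over $P$ attached to this extension with its $0$- and $\infty$-sections glued by a translation. Assembling these models globally — and checking that the higher-codimension strata of $\Delta$, where several nodes or worse singularities appear, do not obstruct the construction — produces $\calJ_V$ together with the extended Lagrangian fibration $\pi\colon\calJ_V\to(\bP^5)^\vee$. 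In practice LSV carry this out via relative theta divisors after a suitable finite base change of $(\bP^5)^\vee$ and a descent argument, which simultaneously provides a relative polarization and hence projectivity; Sacc\'a's later treatment reorganizes the compactification through relative compactified Prym varieties, which is what makes it robust enough to run for every smooth $V$.

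The technical heart is smoothness of the total space $\calJ_V$. Over $U$ this is automatic, and over the generic point of $\Delta$ it follows from the explicit $\bP^1$-bundle picture above; the real work is a local analysis ensuring that no singularities are created over the deeper strata of $\Delta$, using that a general hyperplane section of $V$ has at worst one node and that the relevant strata have the expected codimension. Granting smoothness, the symplectic form extends: $\sigma$ is defined on $J_U$ and on the smooth (semiabelian) part of the fibres over the generic point of $\Delta$, hence on the complement of a subset of codimension $\ge 2$ in the smooth variety $\calJ_V$, so it extends to a global holomorphic $2$-form by the Hartogs property for holomorphic forms on a smooth variety; its degeneracy locus is either empty or a divisor, and since $\sigma$ is nondegenerate off codimension $\ge 2$ it is nondegenerate everywhere. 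It remains to verify the irreducible holomorphic symplectic conditions: $h^{2,0}(\calJ_V)=1$, a Hodge-theoretic computation fed by $H^3(V,\bZ)$, and simple connectedness — the general fibre has $\pi_1\cong\bZ^{10}$, but the vanishing-cycle monodromy along $\Delta$ leaves no nonzero coinvariants, forcing $\pi_1(\calJ_V)=0$.

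Finally, to identify the deformation class as $OG10$, one degenerates $V$ inside the (connected) moduli space of smooth cubic fourfolds to a special cubic $V_0$ for which $\calJ_{V_0}$ can be matched — birationally, and hence, since both sides are smooth symplectic, up to deformation equivalence — with a known model of $OG10$ type, namely a symplectic resolution of a moduli space of sheaves on a K3 surface of the kind introduced by O'Grady. Since the family $\calJ_V$ over the moduli of smooth cubics is everywhere smooth with irreducible holomorphic symplectic fibres — this smoothness in families, for \emph{all} smooth $V$ rather than only the general one, being precisely Sacc\'a's contribution — deformation invariance yields that $\calJ_V$ is of $OG10$ type for every smooth cubic fourfold $V$. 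The main obstacle is unquestionably the two smoothness statements, smoothness of $\calJ_V$ along the stratified discriminant and its persistence in families over all of moduli, which is exactly what distinguishes the two cited papers from the easy part over $U$.
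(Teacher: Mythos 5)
The paper itself does not prove this theorem: it is imported verbatim from \cite{LSV} and \cite{sac2021birational}, so there is no internal argument to compare yours against. Judged as a reconstruction of those papers, your outline has the right global shape — the Donagi--Markman symplectic form on $J_U$, a compactification over the dual variety built from the semiabelian/rank-one-degeneration picture over the one-nodal locus, Hartogs extension of $\sigma$ plus the ``degeneracy locus is a divisor'' argument for nondegeneracy, $h^{2,0}=1$ and simple connectedness via monodromy coinvariants, and identification of the deformation class by specialization together with the fact that birational (or deformation-connected) hyperk\"ahler manifolds are deformation equivalent. But as a proof it is only a narrative: the two points you yourself single out as the technical heart — smoothness of $\calJ_V$ over the deeper strata of the discriminant, and the passage from general $V$ to every smooth $V$ — are asserted rather than established, and they constitute most of the content of the two cited papers.

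You also misattribute the mechanisms. The relative compactified Prym construction (using the conic-bundle/Prym description of intermediate Jacobians of cubic threefolds, after a base change to a family of lines and a descent argument) is the device of \cite{LSV} itself for the general cubic; it is not Sacc\'a's reorganization. Sacc\'a's extension in \cite{sac2021birational} to arbitrary smooth cubics proceeds instead through the birational geometry of symplectic varieties (MMP on suitable birational models together with deformation to the general case treated by LSV), and for special cubics the resulting compactification is not unique — a fact the present paper explicitly relies on later. Similarly, the identification of the deformation type in \cite{LSV} is obtained by degenerating to the chordal cubic, which is a \emph{singular} cubic outside the moduli of smooth cubics, and matching the limit with an O'Grady-type moduli space attached to a degree two K3; it is not a specialization to a special smooth cubic, and there is no canonical smooth family of compactifications over the whole moduli space of smooth cubics that one may invoke for the final deformation-invariance step as you do. None of this affects the present paper, which only uses the statement as a black box, but your sketch should not be read as a proof of it.
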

We note that the same result holds for the irreducible holomorphic symplectic compactification $\calJ_V^T$ of the nontrivial $J_U$-torsor $J^T_U\rightarrow U$ of \cite{MR3832411}. 

Recall that the primitive cohomology $H^4(V,\bZ)_{prim}$ admits a Hodge structure of $K3$-type (up to a Tate twist). 
In particular, as a lattice $$H^4(V,\bZ)_{prim}\cong U^2\oplus E_8^2\oplus A_2.$$ 
We define the lattice of primitive algebraic cycles $$A(V)_{prim}:=H^{2,2}(X)\cap H^4(X,\bZ)_{prim}.$$ 
The main result of \cite{marquand2022cubic} (see also \cite{LPZ}) is a classification of involutions of a smooth cubic fourfold in terms of the sublattice $A(V)_{prim}.$ 
\begin{theorem}[{{{\cite[Theorem 1.1]{marquand2022cubic}}}}]\label{marquandthm}
	Let $V\subset\bP^5$ be a  general cubic fourfold with $\phi_i$ an involution of $V$ fixing a linear subspace of $\bP^5$ of codimension $i$. Then either:
	\begin{enumerate}
		\item $i=1$, $\phi_1$ is antisymplectic and $A(V)_{prim}\cong E_6(2);$
		\item $i=2$, $\phi_2$ is symplectic and $A(V)_{prim}\cong E_8(2);$
		\item $i=3$, $\phi_3$ is antisymplectic and $A(V)_{prim}\cong M.$
	\end{enumerate}
	Here $M$ is the unique rank 10 even lattice obtained as an index 2 overlattice of $D_9(2)\oplus\langle 24\rangle$.
\end{theorem}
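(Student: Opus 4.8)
The plan is to reduce the classification to linear algebra on $\bP^5$ and a cohomological computation. An automorphism of a smooth cubic fourfold is the restriction of a projective linear transformation (Matsumura--Monsky), so an involution $\phi$ of $V$ lifts to an order-two element of $\GL_6$; diagonalizing, write $\bC^6=W_+\oplus W_-$ with eigenvalues $\pm 1$ and $\dim W_+ = p$, $\dim W_- = q$, $p+q=6$, and, possibly after replacing $\phi$ by $-\phi$ (which does not change the induced involution of $V$), assume $p\geq q$, so $q\in\{1,2,3\}$. Since $\phi$ fixes $V$, the defining cubic $F$ satisfies $\phi^*F=\pm F$, i.e.\ $F$ involves only even (resp.\ only odd) powers of the $W_-$-coordinates $y$; in the ``even'' case $F=A(x)+\sum_i x_iQ_i(y)$ with $A$ cubic in $x$ and the $Q_i$ quadratic in $y$. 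Smoothness of $V$ eliminates all but three cases: an ``odd'' $F$ with $q\leq 2$ lies in the ideal of the linear space $\bP(W_+)$ (of dimension $\geq 3$), forcing $\bP(W_+)\subset V$ and $V$ singular; a short case check---using that for $(p,q)=(3,3)$ the coordinate swap $W_+\leftrightarrow W_-$ normalizes $\phi$ up to sign and exchanges the ``even'' and ``odd'' normal forms---leaves exactly $(p,q)=(5,1),(4,2),(3,3)$. In each of these, $\bP(W_+)$ meets $V$ in a smooth cubic of dimension $p-2$ and $\bP(W_-)\subset V$, so $\mathrm{Fix}(\phi|_V)$ is a smooth cubic threefold and a point for $(p,q)=(5,1)$, a smooth cubic surface and a line for $(p,q)=(4,2)$, and an elliptic curve and a plane for $(p,q)=(3,3)$; the codimension of the linear component equals $q$, which we rename $i$, giving $\phi_1,\phi_2,\phi_3$.

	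To decide whether $\phi$ is symplectic, recall that $H^{3,1}(V)$ is one-dimensional, spanned by the Griffiths residue $\mathrm{Res}(\Omega/F^2)$ where $\Omega$ is the standard volume form on $\bP^5$, semi-invariant of weight $\det$ under $\GL_6$. As $\phi$ scales $\Omega$ by $\det\phi=(-1)^q$ and $F$ by $\pm 1$, it acts on $H^{3,1}(V)$---hence on the holomorphic symplectic form of the associated manifold of $OG10$ type---by $(-1)^q$. Therefore $\phi_2$ (with $q=2$) is symplectic and $\phi_1,\phi_3$ (with $q$ odd) are anti-symplectic.

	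Finally, to pin down $A(V)_{prim}$ I would work inside $\Lambda:=H^4(V,\bZ)_{prim}\cong U^2\oplus E_8(-1)^2\oplus A_2(-1)$, which has discriminant group $\bZ/3$. For a very general member of each family, $A(V)_{prim}=\Lambda_+$ in the anti-symplectic cases (there $H^{3,1}(V)\oplus H^{1,3}(V)\subset\Lambda_-$, so $\Lambda_+$ is algebraic, and a Noether--Lefschetz argument shows it is all of $A(V)_{prim}$) and $A(V)_{prim}=\Lambda_-$ in the symplectic case. The rank of $\Lambda_\pm$ comes from the topological Lefschetz fixed point formula $\mathrm{rk}\,\Lambda_+ - \mathrm{rk}\,\Lambda_- = \chi(\mathrm{Fix}(\phi|_V))-5$: substituting the Euler characteristics above---$-6$ for a smooth cubic threefold, $9$ for a smooth cubic surface, $0$ for an elliptic curve, and $1,2,3$ for a point, a line, a plane---gives ranks $6,8,10$ for the relevant sublattices, consistent with $E_6(-2)$, $E_8(-2)$, $M$. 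For the isometry class one computes the discriminant form: since $\phi$ is an involution $2\Lambda\subseteq\Lambda_+\oplus\Lambda_-$, so the glue group is elementary abelian $2$-torsion, and together with $A_\Lambda=\bZ/3$ this forces $A_{\Lambda_\pm}$ to be $2$-elementary up to at most a $\bZ/3$ summand; the remaining invariants $l(A_{\Lambda_\pm})$ and $\delta$---and in the third case the full form, where $\Lambda_+$ fails to be $2$-elementary---are extracted from the intersection numbers of the cycles supported on the fixed components together with the hyperplane class. Nikulin's uniqueness theorem for $2$-elementary lattices then identifies $\Lambda_\pm$ in the first two cases, and in the third one recognizes $\Lambda_+$ as $M$ by producing explicit vectors of the right squares and divisibilities exhibiting it as the index-two overlattice of $D_9(-2)\oplus\langle 24\rangle$. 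This last identification is the step I expect to be the main obstacle, since it requires controlling the embedding and not merely numerical invariants; existence of each $\phi_i$, by contrast, is immediate from the explicit normal forms together with a Bertini-type smoothness argument.
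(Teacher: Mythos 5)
First, note that this statement is not proved in the paper you are working from: it is quoted verbatim from \cite[Theorem 1.1]{marquand2022cubic}, so the comparison is with the strategy of that cited work rather than with an internal argument. Your outline does follow the same broad route as the source: linearize the involution (Matsumura--Monsky), split $\bC^6=W_+\oplus W_-$ and use smoothness of $V$ to reduce to the three eigenvalue types $(5,1),(4,2),(3,3)$ with their normal forms; read off the symplectic character from the action on the Griffiths residue $\mathrm{Res}(\Omega/F^2)$, giving $(-1)^q$; and get $\rank\Lambda_\pm$ from the topological Lefschetz formula (your bookkeeping $\rank\Lambda_+-\rank\Lambda_-=\chi(\mathrm{Fix})-5$ and the resulting ranks $6,8,10$ are correct). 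The identification of $A(V)_{prim}$ with $\Lambda_+$ (anti-symplectic cases) resp. $\Lambda_-$ (symplectic case) for the very general member via a Noether--Lefschetz/period-domain argument is also the right mechanism.

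The genuine gap is in the final lattice identification. You invoke ``Nikulin's uniqueness theorem for $2$-elementary lattices'' to pin down $\Lambda_\pm$ in the first two cases, but the lattices to be identified are \emph{definite} ($E_6(-2)$, $E_8(-2)$, $M$ are positive definite of ranks $6,8,10$), and Nikulin's uniqueness statements require indefiniteness; moreover $E_6(-2)$ is not $2$-elementary (its discriminant group is $(\bZ/2\bZ)^6\oplus\bZ/3\bZ$). Uniqueness within a definite genus is a real issue --- indeed \S\ref{enumerate and exclude} and Table \ref{table enum} of this very paper exist precisely because definite genera of comparable rank and determinant contain many isometry classes --- so this step needs a separate argument (explicit computation of the eigenlattices of the isometry in a concrete model of $H^4(V,\bZ)_{prim}$, explicit algebraic cycles such as the planes/cubic scrolls on the general member, or a genus/class-number or mass-formula check). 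Relatedly, your plan to extract the discriminant form ``from intersection numbers of cycles supported on the fixed components'' cannot work as stated in the $\phi_1$ case: there the fixed locus is a cubic threefold and a point, which contribute classes only to $H^2$ and $H^8$, hence give no primitive algebraic classes in $H^4(V,\bZ)_{prim}$ and no handle on the rank-$6$ lattice. The cited paper closes exactly this step by explicit lattice computations and geometric cycle constructions, which is the content your sketch is missing.
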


 \section{Trivial action on the Discriminant group}\label{sec: trivial act}

Throughout, we let $X$ be a manifold of $OG10$ type, and $\OG\cong H^2(X,\bZ)$. We prove the following:
\begin{theorem}\label{trivial on AL}
	Let $X$ be a manifold of $OG10$ type, and $f\in\Bir(X)$ be a symplectic birational involution. Suppose that $\iota:=\eta_*(f)$ acts trivially on the discriminant group $A_{\OG}$. Then one of the following holds:
	\begin{enumerate}
		\item $\OG_{-}\cong E_8(-2)$ and $\OG_+\cong  U^3\oplus E_8(-2)\oplus A_2(-1)$; or 
		\item $\OG_{-}\cong D_{12}^+(-2)$ and $\OG_+\cong U^2(2)\oplus A_1\oplus A_1(-1)\oplus E_6(-2)  $.
	\end{enumerate}

  Moreover, each involution of $\Lambda$ listed above is unique up to conjugacy in $O(\OG)$, and there exists a manifold of $OG10$ type with a birational involution inducing such an isometry.
\end{theorem}
The strategy to prove Theorem \ref{trivial on AL} is as follows: we first consider arithmetic involutions $\iota\in O(\OG)$ such that $\iota$ acts trivially on $A_{\OG}$, and $\OG_{-}$ is negative definite.
In Section \ref{sec: Leech} we use techniques of Kond\={o} and Mongardi to embed the coinvariant lattice $\OG_{-}$ into the Leech lattice $\leech$. 
We extend the involution $\iota$ to one of the Leech lattice $\leech$, and use the classification of involutions \cite{MR1078499} to obtain three candidates. 
In Section \ref{sec: proof of 3.1} we prove Theorem \ref{trivial on AL} by case-by-case analysis. More precisely, we show that only $E_8(-2)$ and $D_{12}^+(-2)$ are realised as coinvariant lattices $\OG_{-}$ for an involution of $\OG$. 
We then show $\OG_{-}\cap \markman=\varnothing$, and conclude by Theorem \ref{thm: no markman so sympl} that such an involution $\iota$ is induced by a geometric symplectic birational involution $f\in\Bir(X)$ of a manifold $X$ of $OG10$ type. This completes the proof of Theorem \ref{trivial on AL}, and cases (2) and (4) of Theorem \ref{main thm}.

\subsection{The Leech Lattice}\label{sec: Leech} We reduce the classification of involutions $\iota\in O(\OG)$ acting trivially on $A_{\OG}$ to a classification of involutions of the Leech lattice $\leech$. 
\begin{proposition}\label{embed into leech}
	Let $\iota\in O(\OG)$ be an involution acting trivially on $A_{\OG}$ such that $\OG_-$ is negative definite and such that $\OG_{-}\cap \markman=\varnothing$. Then there exists a primitive embedding of $\OG_{-}$ into the Leech lattice $\leech$. Further, there exists an involution of the Leech lattice $\leech$ such that $\leech_-\cong\OG_{-}.$
\end{proposition}
\begin{proof}
    By assumption, we know that $\Lambda_-$ is negative definite, it contains no vectors $v$ such that $v^2 = -2$ and it has rank at most 21 (since $\OG$ has real signature $(3,21)$). Since $\OG_-$ is the coinvariant lattice of $\iota$, which acts trivially on $A_\OG$, we also know that $\OG_-$ is 2-elementary, the isometry $\iota|_{\OG_-} = -\textnormal{id}_{\OG_-}$
    fixes no nontrivial vectors in $\Lambda_-$ and it acts trivially on $A_{\OG_-}$. Moreover 
    \[\gcd(|A_\OG|,\, |A_{\OG_-}|) = 1\] 
    so a direct application of \cite[Proposition 1.15.1]{nikulin} gives us that
    \[\rank\OG_-+l(A_{\OG_-}) \leq \rank\OG_-+l(A_{\OG_+}) \leq \rank \OG_-+\rank \OG_+ = \rank\OG = 24.\]
    Hence under these conditions, similar results as in \cite[Corollary 4.19]{papergroupog10} hold; meaning that there exists a primitive embedding $j\colon\OG_-\hookrightarrow\bL$, and $-\textnormal{id}_{\OG_-}$ extends to an involution of $\bL$ so that $\bL_- = j(\OG_-)$ (up to replacing $O^\#(\Lambda_{\OG_-})$ by $\langle-\textnormal{id}_{\OG_-}\rangle$ in the proof of \cite[Corollary 4.19]{papergroupog10}).
\end{proof}

The nontrivial involutions $\iota\in O(\leech)$ are classified: 

\begin{proposition}[{{{\cite[Table 1]{MR1078499}}}}]\label{cases}
	There exist exactly three conjugacy classes of nontrivial involutions of the Leech lattice $\leech$. They are classified by specifying their invariant/coinvariant lattices:\begin{enumerate}
		\item $\leech_-\cong E_8(-2)$ and $\leech_+\cong BW_{16}$;
		\item $\leech_-\cong D_{12}^+(-2)$ and $\leech_+\cong D_{12}^+(-2)$; or
        \item $\leech_-\cong BW_{16}$ and $\leech_+\cong E_8(-2)$.
	\end{enumerate}
\end{proposition}
We have three possible candidates for $\OG_{-}$ as above. It remains to be seen whether there exists an involution $\iota\in O(\OG)$ whose coinvariant lattice is the given candidate. We show such an involution exists provided there exists a primitive embedding of each candidate into $\OG$. 
\begin{lemma}\label{inv}
	Let $M$ be a 2-elementary lattice with a primitive embedding $M\hookrightarrow L$ into a lattice $L$ and let
    % such that 
    $N:=(M)^\perp_L$. Then there exists an involution $\iota\in O(L)$ such that the coinvariant lattice $\Lneg=M$ and the invariant lattice $L_+=N$.
\end{lemma}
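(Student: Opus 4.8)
The plan is to build $\iota$ by prescribing it separately on $M$ and on $N=M^\perp_L$, and then to check that the resulting isometry of $M\oplus N$ actually extends to all of $L$. Concretely, set $\iota|_M=-\id_M$ and $\iota|_N=\id_N$; this is visibly an involution of the finite-index sublattice $M\oplus N\subseteq L$, with coinvariant lattice exactly $M$ and invariant lattice exactly $N$. To promote it to an isometry of $L$ itself I would invoke Nikulin's gluing formalism (\cite[\S1.5]{nikulin}, and in particular \cite[Cor.~1.5.2]{nikulin}): the overlattice $L$ of $M\oplus N$ corresponds to a ``glue'' subgroup $\Gamma\subseteq A_M\oplus A_N$ which is the graph of an anti-isometry $\gamma\colon H_M\xrightarrow{\sim} H_N$ between subgroups $H_M\le A_M$, $H_N\le A_N$; the pair $(\iota|_M,\iota|_N)$ extends to $L$ precisely when the induced automorphisms of $A_M$ and $A_N$ preserve $\Gamma$.

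Here is where the $2$-elementary hypothesis on $M$ does the work. The isometry $-\id_M$ induces on the discriminant group $A_M$ the automorphism $-\id_{A_M}$. Since $M$ is $2$-elementary, $A_M\cong(\bZ/2\bZ)^{\ell(A_M)}$, so every element is $2$-torsion and $-\id_{A_M}=\id_{A_M}$. Thus $\iota|_M$ acts trivially on $A_M$, and $\iota|_N=\id_N$ acts trivially on $A_N$; consequently the pair acts as the identity on $A_M\oplus A_N$, hence certainly stabilizes the glue group $\Gamma$. By \cite[Cor.~1.5.2]{nikulin} the involution therefore extends to an isometry $\iota\in O(L)$. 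Since $\iota$ restricts to $-\id$ on $M$ and $\id$ on $N$ and $M\oplus N$ has finite index in $L$, the invariant sublattice $L_+$ contains $N$ and is disjoint from $M\otimes\bQ$, forcing $L_+=N$ (it is the primitive closure of $N$, which is already primitive by construction), and similarly $\Lneg=M$.

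I do not anticipate a serious obstacle: the only point requiring care is the verification that $-\id_M$ acts trivially on $A_M$, which is exactly the content of the $2$-elementary assumption, and the observation that $N$ being defined as an orthogonal complement is automatically primitive in $L$, so no further saturation is needed when identifying $L_+$ with $N$. One should also record that $M$ and $N$ are genuinely orthogonal complements of each other (so that $\rank M+\rank N=\rank L$ and the glue construction applies verbatim), which is immediate from $N:=M^\perp_L$ together with the nondegeneracy of the forms. The argument is essentially a citation of Nikulin's extension criterion, with the $2$-elementary condition supplying the needed triviality on discriminant groups.
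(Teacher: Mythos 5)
Your argument is correct, and it builds the same involution as the paper: $-\id$ on $M$ and $+\id$ on $N=M^\perp_L$, extended from the finite-index sublattice $M\oplus N$ to $L$. The only genuine difference is how the extension step is justified. The paper does it by a direct computation over $\bQ$: since the glue group $L/(M\oplus N)$ injects into $A_M$ (the relevant projection is injective because $N$, being an orthogonal complement, is saturated in $L$), it is $2$-elementary, so $2x\in M\oplus N$ for every $x\in L$; writing $x=\tfrac12(x_-+x_+)$ with $x_-\in M$, $x_+\in N$ gives $\iota_\bQ(x)=x-x_-\in L$. You instead quote Nikulin's gluing criterion \cite[Cor. 1.5.2]{nikulin}: the pair $(-\id_M,\id_N)$ extends to $L$ if and only if it preserves the glue subgroup of $A_M\oplus A_N$, which is automatic because $-\id_{A_M}=\id_{A_M}$ on a $2$-torsion group. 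Both routes hinge on exactly the same point, namely that $2$-elementarity of $M$ kills the obstruction on the discriminant; yours is a clean citation (and is consistent with how the paper extends isometries in the corollary following Proposition \ref{embed into leech}), while the paper's is self-contained and avoids the gluing formalism. Your explicit identification of $L_+=N$ and $\Lneg=M$, using that an orthogonal complement is automatically primitive and that $M$ is primitive by hypothesis, is also correct and makes precise what the paper's proof leaves implicit.
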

\begin{proof}
	By assumption we have that 
	$$M\oplus N\hookrightarrow L\hookrightarrow L\otimes \bQ\cong (M\oplus N)\otimes \bQ.$$
	
	We can define $\iota_\bQ:L_\bQ\rightarrow L_\bQ$ by $\iota(x)=-x$ for $x\in M$, and $\iota(x)=x$ for $x\in N$. We want to show that $\iota_\bQ$ is defined over $L$. By assumption $L/(M\oplus N)\cong (\bZ/2\bZ)^a$, and thus for all $x\in L$, we have that $2x\in M\oplus N$.
	Let $x\in L$. By above, we can write $x=\frac{x_-+x_+}{2},$ with $x_-\in M, x_+\in N$. Thus $\iota_\bQ(x)= x \mod M\oplus N$,
	and $[\iota_\bQ(x)]=[x]$ in $L/(M\oplus N)$; thus $\iota_\bQ(x) \in L$. 
\end{proof}
It remains to be seen whether a primitive embedding exists in each of the cases in Proposition \ref{cases}. We require the following lemma:

\begin{lemma}\label{existence of prim embedding}
	Let $M$ be a 2-elementary, negative definite lattice of rank $r$ with invariants $(r,a,\delta)$, where $a=l(A_{M})$. Then there exists a primitive embedding $M\hookrightarrow \OG$ if and only if there exists a lattice $N$ of signature $(3, 21-r)$ satisfying the following properties:
	\begin{enumerate}
		\item $A_N\cong(\bZ/2\bZ)^a\oplus \bZ/3\bZ$;
		\item $q_N|_{(\bZ/2\bZ)^a}\cong-q_{M}$;
		\item $q_N|_{\bZ/3\bZ}\cong q_{\OG}$.
	\end{enumerate}
	We say $\delta:=\delta_N=0$ or $1$ if and only if $\delta_{M}=0$ or $1$. In this case, $N\cong (M)_\Lambda^\perp$
\end{lemma}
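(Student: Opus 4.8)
The plan is to apply Nikulin's theory of primitive embeddings of lattices into lattices (\cite[Proposition 1.15.1]{nikulin}) directly. Recall that a primitive embedding of an even lattice $M$ into an even lattice $S$ with $A_S \cong \bZ/3\bZ$ is equivalent to the data of a subgroup $H \subset A_M$, an even lattice $T$ with appropriate signature and discriminant form, and a gluing isomorphism matching the data; here $T = M^\perp_{\OG}$. Since $M$ is $2$-elementary, $A_M \cong (\bZ/2\bZ)^a$, and the $3$-part of $A_{\OG}$ cannot interact with the $2$-part of $A_M$, so the gluing subgroup $H$ must be trivial. Thus a primitive embedding $M \hookrightarrow \OG$ exists if and only if there is an even lattice $T$ of signature $(3-\text{(positive rank contributions)}, \ldots)$ — more precisely signature $(3, 21-r)$ since $M$ is negative definite of rank $r$ and $\OG$ has signature $(3,21)$ — with discriminant form $q_T \cong (-q_M) \oplus q_{A_{\OG}}$. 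Writing $q_{A_{\OG}}$ on the $\bZ/3\bZ$ factor and $-q_M$ on the $(\bZ/2\bZ)^a$ factor gives exactly conditions (1), (2), (3).

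First I would set $T := M^\perp_{\OG}$ for one direction: if the primitive embedding exists, then since $\OG$ is even of signature $(3,21)$ and $M$ is negative definite of rank $r$, the orthogonal complement $T$ has signature $(3, 21-r)$, and because $M$ is primitively embedded we get $A_T \cong A_{\OG} \oplus A_M$ as groups with $q_T \cong q_{\OG} \oplus (-q_M)$ — here one uses that the glue group is forced to be trivial because $\gcd(|A_M|, 3) = 1$ forces any isotropic glue subgroup of $A_M \oplus A_{\OG}$ projecting isomorphically onto subgroups of each factor to be zero (the $3$-torsion and $2$-torsion cannot be identified). This yields (1), (2), (3). Conversely, given such a lattice $N$, I would invoke \cite[Proposition 1.15.1]{nikulin} (or equivalently \cite[Corollary 1.5.2]{nikulin}, gluing $M \oplus N$ along the trivial subgroup is automatic) to produce an even overlattice $L$ of $M \oplus N$ in which $M$ embeds primitively; one then checks $L \cong \OG$ by verifying it is even of signature $(3,21)$ with discriminant form $q_{\bZ/3\bZ}$ of the right type, and invoking the uniqueness of the lattice $\OG$ in its genus (an even lattice of signature $(3,21)$ with discriminant form that of $A_2$ is unique, since it is indefinite of rank $\geq 3$ with the required invariants — this is a standard consequence of \cite{nikulin}).

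For the statement about $\delta$: the invariant $\delta$ is defined via whether the quadratic form on the $2$-elementary part takes values in $\bZ/2\bZ$ or $\tfrac14\bZ/2\bZ$; since $q_N$ restricted to $(\bZ/2\bZ)^a$ equals $-q_M$, and $\delta$ depends only on this form (negation does not change whether values lie in $\bZ/2\bZ$), we get $\delta_N = \delta_M$. I would state this as the final clause once conditions (1)--(3) are in place.

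The main obstacle I anticipate is the bookkeeping in the forward direction: justifying carefully that the glue group between $M$ and $T = M^\perp_{\OG}$ inside $\OG$ is trivial, and hence that $A_T$ splits as claimed. This requires noting that $\OG/(M \oplus T)$ embeds into $A_M$ and into $A_T$ compatibly with the quadratic forms, and that its image in $A_{\OG} = \OG^*/\OG$ is trivial; combined with $|A_M|$ being a power of $2$ and $|A_{\OG}| = 3$, one deduces the glue is a $2$-group mapping isomorphically to a subgroup of $A_M$, which (since $\OG$ is already unimodular away from $3$... more precisely, has no $2$-torsion in its discriminant group) must vanish. Alternatively, and perhaps more cleanly, I would simply cite \cite[Proposition 1.15.1]{nikulin} in both directions, observing that in the notation there the only admissible primitive embedding has $H_M = 0$ and $H_{A_{\OG}} = 0$ precisely because $\gcd(2,3)=1$, which immediately translates the existence of the embedding into the existence of $N = T$ with properties (1)--(3). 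This makes the argument short and removes the need for any delicate direct manipulation.
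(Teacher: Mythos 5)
Your headline route --- apply \cite[Proposition 1.15.1]{nikulin}, observe that the gluing data $(H_M,H_{\OG},\gamma)$ there must vanish because $A_M$ is a $2$-group while $A_{\OG}\cong\bZ/3\bZ$, and in the converse direction use uniqueness of the indefinite lattice $\OG$ in its genus --- is exactly the paper's proof, which consists of citing precisely this proposition.

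However, two of your ``hands-on'' justifications are wrong as stated, and in a way that matters if you try to avoid the black box. First, the geometric glue group $\Gamma=\OG/(M\oplus T)$, where $T=M^{\perp}_{\OG}$, is \emph{not} trivial: it embeds into $A_M\oplus A_T$ with injective projections, and the determinant relation $|A_M|\,|A_T|=|A_{\OG}|\,|\Gamma|^{2}$ together with the conclusion $A_T\cong(\bZ/2\bZ)^a\oplus\bZ/3\bZ$ forces $|\Gamma|=2^{a}$. Indeed, if $\Gamma$ were trivial one would have $\OG=M\oplus T$, hence $A_{\OG}\cong A_M\oplus A_T$, contradicting $|A_{\OG}|=3$ whenever $a>0$ (for $M=E_8(2)$ the glue is $(\bZ/2\bZ)^{8}$). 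What is trivial, by coprimality of $2$ and $3$, is Nikulin's embedding data $H_M\subset A_M$, $H_{\OG}\subset A_{\OG}$ and the isometry between them; a correct direct computation of $q_T$ instead uses $A_{\OG}\cong\Gamma^{\perp}/\Gamma$ inside $A_M\oplus A_T$, with $\Gamma$ projecting isomorphically onto all of $A_M$. Second, in the converse direction the overlattice of $M\oplus N$ realising the embedding is obtained by gluing along the graph of an anti-isometry between $A_M$ and the $2$-part of $A_N$ (which exists by condition (2)), not ``along the trivial subgroup'': trivial gluing returns $M\oplus N$ itself, whose discriminant group has order $2^{2a}\cdot 3$, so it is not $\OG$. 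With these corrections --- or by simply delegating both directions to \cite[Proposition 1.15.1]{nikulin}, as the paper does --- the rest of your argument, including the remark that $\delta$ is unchanged under negation of the finite quadratic form, is fine.
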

\begin{proof}
	This follows immediately from \cite[Prop 1.15.1]{nikulin}.
\end{proof}

\begin{theorem}\label{thm: prim embedding}
    Let $M \in \{E_8(-2),\, D_{12}^+(-2),\, BW_{16}\}$. There exists an involution of $\OG$ with $\OG_-\cong M$ if and only if $M = E_8(-2)$ or $D_{12}^+(-2)$. Moreover, such involution is unique up to conjugacy in $O(\OG)$.
\end{theorem}

\begin{proof}
    We show the existence of a primitive embedding into $\OG$ in the case of $M = E_8(-2),\, D_{12}^+(-2)$, and prove the nonexistence of such an embedding in the case of $M = BW_{16}$.

    First, consider the involution of $\OG$ defined by interchanging the two copies of $E_8(-1)$, and identity elsewhere. Then $\OG_{-}\cong E_8(-2)$ and thus $E_8(-2)$ embeds primitively into $\OG$ (see \cite[\S 5]{MR728142} for more details).

    Next, we show that $D_{12}^+(-2)$ embeds primitively into $\OG$ by applying Lemma \ref{existence of prim embedding}. Recall that the lattice $D_{12}^+(-2)$ is an even, 2-elementary lattice with signature $(0,12),$ $ a=12$ and $\delta=1$. We need to check whether or not the genus $g$ of even lattices determined by the signature pair $(3,9)$ and $q_{A_2(-1)}\oplus (-q_{D_{12}^+(-2)})$ is empty. We use the function \textbf{is\_genus} on Hecke \cite{nemo}, and obtain that $g$ is not empty and in particular, it contains the lattice
	$$N := U^2(2)\oplus A_1\oplus A_1(-1)\oplus E_6(-2) .$$
	The latter can be shown by computing the \emph{genus symbol} of a gram matrix associated to $N$ and comparing it to the symbol of $g$ (the genus symbol, in the convention of Conway \& Sloane, uniquely determines a genus, see \cite[Chapter 15, \S 7]{splg} for more details). This is implemented for instance on Hecke \cite[\textbf{genus}]{nemo}.

  Finally, we show that the Barnes--Wall lattice $BW_{16}$ does not embed primitively into $\OG$. Recall that $BW_{16}$ is an even 2-elementary lattice of signature $(0,16)$, $a=8$ and $\delta=0$.	By Lemma \ref{existence of prim embedding}, a primitive embedding $BW_{16}\hookrightarrow \OG$ exists if and only if there exists an even lattice $N$ of signature $(3,5)$ and discriminant group $A_N\cong(\bZ/2\bZ)^8\oplus \bZ/3\bZ$ such that $q_N|_{(\bZ/2\bZ)^8}$ takes values in $\bZ/2\bZ$ and $q_N|_{\bZ/3\bZ}=q_{A_2(-1)}$.
    Similarly to the $D_{12}^+(-2)$ case, one uses the function \textbf{is\_genus} to determine that the genus of even lattices with invariants $(3,5)$ and $q_N = q_{A_2(-1)}\oplus (-q_{BW_{16}})$ is empty and so we do not have a primitive embedding of $BW_{16}$ into $\OG$.

    To conclude, we note that Lemma \ref{existence of prim embedding} tells us that the genus of the complement of $M =E_8(-2),\, D_{12}^+(-2)$ in $\OG$ is uniquely determined. Moreover, according to \cite[Theorem 1.14.2]{nikulin}, such a genus consists of a unique isometry class, and for any representative $N$ of such isometry class, we have that $O(N)\to O(A_N)$ is surjective. Hence, by \cite[Corollary 1.5.2]{nikulin}, any two involutions of $\OG$ with $\OG_+ \cong N$ and $\OG_-\cong M$ are conjugate in $O(\OG)$.
 
\end{proof}

\subsection{Proof of Theorem \ref{trivial on AL}}\label{sec: proof of 3.1}

We have exhibited involutions $\iota\in O(\OG)$ acting trivially on $A_{\OG}$ with $\OG_{-}$ isomorphic to either $ E_8(-2)$ or $D_{12}^+(-2)$. In order to conclude that both involutions are induced by symplectic birational involutions of a manifold $X$ of $OG10$ type we must show that neither contain vectors $v$ with $v^2=-2,$ or $v^2=-6$ and $\textnormal{div}_\OG(v)=3$. 

\begin{lemma}\label{no long root}
	Let $\iota\in O(\OG)$ be an involution and suppose that $\OG_{-}$ contains a vector $v$ with $v^2=-6$ and $\textnormal{div}_{\OG}(v)=3$. Then $A_{\OG_{-}}$ contains an element of order 3.
\end{lemma}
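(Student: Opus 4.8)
Let $v \in (\OG)_{-}$ be a long root, so $v^2 = -6$ and $\mathrm{div}_{\OG}(v) = 3$. The plan is to produce an explicit order-$3$ element of $A_{(\OG)_{-}}$, namely the class of $v/3$, and the whole argument is a short lattice-theoretic computation.

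First I would observe that $v$ is primitive in $\OG$: if $v = mw$ for some $w \in \OG$ and $m > 1$, then $-6 = v^2 = m^2 w^2$ forces $m^2 \mid 6$, which is impossible. In particular $v/3 \notin \OG$, and since $(\OG)_{-} \subseteq \OG$ we also have $v/3 \notin (\OG)_{-}$.

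Next, I would check that $v/3 \in (\OG)_{-}^{*}$. For any $w \in (\OG)_{-} \subseteq \OG$ we have $v \cdot w \in v \cdot \OG = 3\bZ$, because $\mathrm{div}_{\OG}(v) = 3$; hence $(v/3)\cdot w \in \bZ$. Thus $v/3$ defines an element $[v/3] \in A_{(\OG)_{-}} = (\OG)_{-}^{*}/(\OG)_{-}$. By the previous paragraph $[v/3] \neq 0$, while $3\cdot [v/3] = [v] = 0$ since $v \in (\OG)_{-}$. Therefore $[v/3]$ has order exactly $3$ in $A_{(\OG)_{-}}$, which is the claim.

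I do not expect any real obstacle here: the only point requiring a moment's care is verifying that $[v/3]$ is genuinely nontrivial in the discriminant group, which is exactly where primitivity of $v$ in $\OG$ (a consequence of $v^2 = -6$) is used; everything else is immediate from the definition of divisibility. Note also that this lemma, combined with $A_{\OG} \cong \bZ/3\bZ$, is precisely what one needs to rule out long roots in $2$-elementary coinvariant lattices such as $E_8(2)$ and $D_{12}^{+}(2)$ in the subsequent step.
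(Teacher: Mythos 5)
Your proof is correct and follows essentially the same route as the paper: both arguments use $\mathrm{div}_{\OG}(v)=3$ to see that $v$ pairs integrally with $(\OG)_-$ after dividing by $3$, and both exhibit the class of $v/3$ (in the paper written as $kv^*$ with $\mathrm{div}_{(\OG)_-}(v)=3k$) as a nontrivial order-$3$ element of $A_{(\OG)_-}$. Your explicit verification that $[v/3]\neq 0$ via primitivity of $v$ in $\OG$ is a fine (even slightly more detailed) way to justify the nontriviality the paper leaves implicit.
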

\begin{proof}
	Let $v\in \OG_{-}$ be such that $v^2=-6$ and $\textnormal{div}_{\OG}(v)=3$. Then $3$ divides the divisibility of $v$ in $\OG_-$. We can write $\textnormal{div}_{\OG_{-}}(v)=3k$ for some positive integer $k$. Then $[v^*]=\left[\frac{v}{3k}\right]$ defines a nonzero element of $A_{\OG_{-}}$; in particular, $kv^*$ is a nontrivial element of order 3.
\end{proof}

\begin{proof}[Proof of Theorem \ref{trivial on AL}]
	The discriminant group of both $E_8(-2)$ and $D_{12}^+(-2)$ contains no elements of order three; by Lemma \ref{no long root} primitive vectors $v\in \OG_{-}$ with $v^2=-6$ and $\textnormal{div}_{\OG}(v)=3$ cannot exist. The maximal norm of both lattices is $-4$, and so they do not contain any roots. Thus in both cases $\OG_{-}$ is a negative definite lattice with $\OG_{-}\cap \markman=\varnothing$; we have also shown these are the only possible negative definite coinvariant lattices for an involution $\iota\in O(\OG)$ acting trivially on the discriminant group $A_\OG$. By Theorem \ref{thm: no markman so sympl} each such involutions is induced by a symplectic birational involution of a manifold of $OG10$ type. The classification of the corresponding invariant lattices follow from the proof of Theorem \ref{thm: prim embedding}.
\end{proof}

\begin{remark}
	In fact, the lattice $D_{12}^+(-2)$ embeds primitively into the Mukai lattice of a $K3$ surface, and so one also obtains a symplectic birational involution of a manifold of $K3^{[2]}$ type with this coinvariant lattice (see \cite{huybrechtsderived}, or \cite[\S 7.5]{MR4450620} for a different description).
\end{remark}

    \section{Nontrivial action on discriminant, rank < 12}\label{sec: non-trivial <12}
It remains to be seen whether there exist birational involutions of manifolds of $OG10$ type that act nontrivially on the discriminant group. In this section, we prove that this is indeed possible, and classify such involutions whose coinvariant lattice has small rank. More precisely, we prove the following:

\begin{theorem}\label{partial class}
	Let $X$ be a manifold of $OG10$ type, and $f\in \Bir(X)$ a symplectic birational involution, such that the induced action $\iota:=\eta_*(f)$ is nontrivial on the discriminant group $A_\OG$. Assume further that $$\rank\OG_-<12.$$ Then one of the following holds:
	\begin{enumerate}
		\item $\OG_-\cong E_6(-2)$ and $\OG_+\cong U^3\oplus D_4^3(-1)$; or
		\item $\OG_-\cong M(-1)$ and $\OG_+\cong U^2 \oplus A_1\oplus A_1(-1) \oplus E_8(-2) $,
	\end{enumerate}
where $M$ is the unique rank 10 lattice obtained as an index 2 overlattice of $D_9(2)\oplus \langle 24\rangle$.

 Moreover, each involution of $\Lambda$ listed above is unique up to conjugacy in $O(\OG)$, and there exists a manifold of $OG10$ type with a birational involution inducing such an isometry.
\end{theorem}

We prove Theorem \ref{partial class} as follows: in Section \ref{sec: split U} we use Nikulin's classification of 2-elementary lattices to develop a criterion for the invariant lattice $\OG_+$ to split a $U$ summand, i.e. $\OG_+=U\oplus \Gamma$ for some lattice $\Gamma$. This is always satisfied under the assumption that $\rank(\OG_-)<12$. The latter allows us to classify such involutions in Section \ref{sec:proof of partial class} by utilising Theorem \ref{marquandthm}, on the classification of involutions of a cubic fourfold \cite{LPZ},\cite{laza2019automorphisms},\cite{marquand2022cubic}. In particular, in the case of nontrivial action on $A_\OG,$ the lattice $\OG_-$ occurs as an invariant lattice for an antisymplectic involution on a cubic fourfold, leading to the possibilities above. Finally, we show that Theorem \ref{thm: no markman so sympl} applies, completing the proof of Theorem \ref{partial class}, and cases (1) and (3) of Theorem \ref{main thm}.

\subsection{Splitting a \texorpdfstring{$U$}{U} summand}\label{sec: split U}

Assume that $f\in\Bir(X)$ is a symplectic birational involution of a manifold $X$ of $OG10$ type, such that $\iota:=\eta_*(f)\in O(\OG)$ is an involution that acts by $-\textnormal{id}$ on $A_{\OG}$. 
By Proposition \ref{properties of Lneg}, $\OG_{-}$ is negative definite of rank $1\leq r\leq 21$, and $\OG_{+}$ is a 2-elementary lattice of signature $(3, 21-r)$. 
In order to prove Theorem \ref{partial class}, we first use Nikulin's classification of $2$-elementary lattices. We state the result only in the case that we require here.

\begin{lemma}[{{{\cite[Theorems 4.3.1]{nikulin_elementary},\cite[Theorem 1.5.2]{MR728992}}}}]\label{lem: 2elem}

    Let $N$ be a 2-elementary lattice of signature $(3,21-r)$, $A_N\cong (\bZ/2\bZ)^a$ and invariant $\delta:= \delta_N$. Then $N$ is unique in its genus, and the following hold:

    \begin{enumerate}
	\item $a\leq \min\{24-r, r\}$,
	\item $a\equiv r \mod 2$,
	\item $r\equiv 2\mod 4 $ if $\delta=0$,
	\item $\delta=0, r\equiv 2\mod 8$  if $a=0$,
	\item $r\equiv 3 \mod 8$ if $a=1$,
	\item $\delta=0$ if $a=2$, $r\equiv 6 \mod 8$,
	\item $r\equiv 2\mod 8$ if $\delta=0, a=24-r$.
\end{enumerate}
\end{lemma}

Applying the above lemma, we establish a numerical criteria for $\OG_+$ to split a $U$ summand.

\begin{lemma}\label{split U}
	Let $r=\rank \OG_{-}$, $a,\,\delta$ as above. Then $\OG_{+}$ splits of a $U$ summand if and only if:
	\begin{enumerate}
		\item $r\leq 20$, and $a\leq 22-r$;
		\item If $a=22-r$ and $\delta=0$, then $r\equiv 2 \mod 8$.
	\end{enumerate}
\end{lemma}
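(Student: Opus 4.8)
The plan is to reduce the statement to a computation with Nikulin's existence and uniqueness criteria for indefinite $2$-elementary lattices. Since $(\OG)_+$ is $2$-elementary and indefinite, its isometry class is determined by the data $((3,21-r),a,\delta)$, so ``$(\OG)_+$ splits off a $U$ summand'' is equivalent to the assertion that the $2$-elementary lattice $U\oplus K$ exists with $K$ of signature $(2,20-r)$ and with $(a_K,\delta_K)=(a,\delta)$ — indeed if such a $K$ exists then $U\oplus K$ has the same invariants $((3,21-r),a,\delta)$ as $(\OG)_+$, hence is isometric to it; conversely if $(\OG)_+\cong U\oplus K$ then $K$ is such a lattice. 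So the first step is to record this equivalence carefully, noting that $U$ is unimodular and even so $A_{U\oplus K}=A_K$ with the same quadratic form, and $\delta_{U\oplus K}=\delta_K$.

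Next I would observe that the existence of an even $2$-elementary lattice $K$ of signature $(2,20-r)$ with invariants $(a,\delta)$ is governed by Nikulin's list \cite{MR728992} (the same seven conditions (1)--(7) recalled above, but now applied with the signature of $K$, i.e. with $r$ replaced by $r+2$ in the ``rank'' slot and the appropriate signature $(2, 20-r)$). The point is that the conditions (1)--(7) for $(\OG)_+$, which hold by hypothesis, must be compared with the analogous conditions for $K$. Most of the congruence conditions (2)--(7) are about $r \bmod 4$ or $r \bmod 8$, and since the rank of $K$ is $r+2$, these get shifted. The truly new constraint comes from condition (1), $a\le\min\{24-(r+2),\,r+2\}=\min\{22-r,\,r+2\}$; since $a\equiv r\pmod 2$ and we already know $a\le\min\{24-r,r\}$, the binding inequality is $a\le 22-r$, which forces $r\le 20$ as soon as $a\ge 2$ — and the case $a\le 1$ has to be handled separately using conditions (4),(5). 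This explains clause (1) of the Lemma. For clause (2): when $a=22-r$ we are in the ``extremal'' case $a=24-(r+2)$ for the lattice $K$, and Nikulin's condition (7) forces $(r+2)\equiv 2\pmod 8$ precisely when $\delta=0$, i.e. $r\equiv 0\pmod 8$ — wait, one must be careful: condition (7) reads ``$r\equiv 2\bmod 8$ if $\delta=0,\ a=24-r$'', applied to $K$ of rank $r+2$ this is $r+2\equiv 2\pmod 8$, i.e. $r\equiv 0 \pmod 8$; but the Lemma states $r\equiv 2\pmod 8$, so here I would double-check the indexing conventions (whether ``rank'' in Nikulin's list refers to total rank or to $t_-$), and phrase the argument so that the congruence that drops out of condition (7) for $K$ matches clause (2). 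The cleanest route is to just run through cases (1)--(7) for $K$ one at a time, check which are automatically implied by the same-numbered condition for $(\OG)_+$ together with $r\le 20$ and $a\le 22-r$, and isolate the one residual condition, which will be clause (2).

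Finally I would assemble the two directions. For the ``only if'': if $(\OG)_+\cong U\oplus K$, then $K$ is an even $2$-elementary lattice with the stated signature and invariants, so Nikulin's conditions (1)--(7) hold for $K$; reading off condition (1) for $K$ gives $a\le 22-r$ (hence $r\le 20$), and reading off the relevant extremal condition gives clause (2). For the ``if'': assuming clauses (1) and (2) together with the hypotheses already in force (namely conditions (1)--(7) for $(\OG)_+$), I would verify that all of Nikulin's conditions (1)--(7) hold for a lattice $K$ of signature $(2,20-r)$ with invariants $(a,\delta)$, hence such a $K$ exists, hence $U\oplus K$ is a $2$-elementary lattice with the same invariants as $(\OG)_+$ and therefore (by uniqueness in the indefinite case) isometric to it.

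The main obstacle I expect is purely bookkeeping: matching Nikulin's congruence conditions for $K$ (rank $r+2$, signature $(2,20-r)$) against those already known for $(\OG)_+$ (rank $r$, signature $(3,21-r)$) without an off-by-$2$ (or off-by-$8$) error, and in particular confirming that the only genuinely new constraints are $a\le 22-r$ and the single extremal congruence in clause (2) — everything else should follow formally from the hypotheses. A secondary subtlety is the small-$a$ corner cases ($a=0$ and $a=1$), where conditions (4) and (5) interact with the inequality $a\le 22-r$ and one must check these are consistent rather than vacuous.
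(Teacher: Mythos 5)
Your proposal takes essentially the same route as the paper's (equally terse) proof: reduce the splitting of a $U$ summand to the existence of an even $2$-elementary lattice $K$ of signature $(2,20-r)$ with the same invariants $(a,\delta)$, using that an indefinite $2$-elementary lattice is determined by its signature and $(a,\delta)$, and then read the constraints off Nikulin's existence criteria applied to $K$. The one point you flag but leave unresolved resolves in favour of the Lemma as stated: Nikulin's congruence conditions are conditions on the signature \emph{difference} $t_+-t_-$, and stripping off a hyperbolic plane (signature $(1,1)$) leaves $t_+-t_-=r-18$ unchanged, so the congruence conditions for $K$ coincide verbatim with those already known to hold for $(\OG)_+$; only the rank slots shift, namely $a\le\operatorname{rank}K=22-r$ in condition (1) and the extremal threshold $a=22-r$ in condition (7), which with $\delta=0$ still yields $r\equiv 2\pmod 8$. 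Your tentative substitution of $r\mapsto r+2$ into the congruence itself (giving $r\equiv 0\pmod 8$) is the wrong bookkeeping move; with that corrected, the small-$a$ cases you worry about are automatic from the corresponding conditions for $(\OG)_+$, and the remaining content of clause (1) is just $r\le 20$, i.e.\ $20-r\ge 0$ so that the signature $(2,20-r)$ makes sense (for $r=21$ the lattice $(\OG)_+$ is positive definite of rank $3$ and cannot contain $U$ at all).
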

\begin{proof}
	Assume $\OG_+$ splits of a $U$ summand, i.e $\OG_+\cong N\oplus U$. Applying Nikulin's classification of 2-elementary lattices to the lattice $N$ with invariants $((2, 20-r), a, \delta)$, we see the above conditions are necessary for the existence of such a lattice $N$. Conversely, assume the conditions in the theorem hold. Then again by the classification, there exists a 2-elementary lattice $N$ with invariants $((2, 20-r), a, \delta)$. Then $N\oplus U$ has the same invariants as $\OG_+$, and thus are in the same genus. Since $\OG_+$ is indefinite, it is unique and the claim holds.
\end{proof}

\subsection{Proof of Theorem \ref{partial class}}\label{sec:proof of partial class}
The following lemma provides us with a simple way to determine when there exists a vector $v\in \OG_-$ with $v^2=-6$ and $\textnormal{div}_\OG(v)=3,$ which is necessary for the proof of Theorem \ref{partial class}.

\begin{lemma}\label{lem: characterisation long roots}
    Let $L$ be a lattice such that $A_L\cong \bZ/3\bZ$ and let $\iota\in O(L)$ be an involution acting nontrivially on $A_L$. Then for a vector $v\in L_-$ we have
    \[\textnormal{div}_L(v) = 3 \iff \textnormal{div}_{L_-}(v) \in 3\bZ.\]
\end{lemma}

\begin{proof}
    Let us denote $M := L_+$ and $N:= L_-$. One implication is already clear from Lemma \ref{no long root}. 
    Now let $v\in N$ primitive be such that $\textnormal{div}_N(v) \in 3\bZ$ and let $w\in L$. Since $\iota$ has order 2, we know that $2L\subseteq M\oplus N$. Thus there exist $w_M\in M$ and $w_N\in N$ such that $w = \frac{w_M+w_N}{2}$. Now, since $M$ and $N$ are in orthogonal direct sum in $L$, $(v, w_M) = 0$ and moreover, $(v, w_N)\in 3\bZ$. But now, $v\in N\subseteq L$ with $L$ integral, so we have that 
    \[\left(v, \frac{w_M+w_N}{2}\right) = \frac{1}{2}(v, w_N)\]
    is an integer. Hence, $(v, w_N)\in 2\mathbb{Z}\cap 3\bZ = 6\bZ$, and we deduce that $(v, w)\in 3\bZ$. Hence, since we chose $w\in L$ arbitrary, we conclude that $\textnormal{div}_L(v) = 3$.
\end{proof}

We prove Theorem \ref{partial class} by first proving that under the rank assumption, $\OG_+$ splits a $U$ summand, and then utilising the classification of involutions of a cubic fourfold in Theorem \ref{marquandthm}. Finally, we  apply Lemma \ref{lem: characterisation long roots} to conclude such an involution is induced geometrically.

\begin{proof}[Proof of Theorem \ref{partial class}]
    Let $f\in \Bir(X)$ be as in the assumptions of the statement, and denote by $\iota:=\eta_*(f)\in O(\OG)$ the induced involution on $\OG$. We claim the conditions of Lemma \ref{split U} are satisfied --- indeed, since $r:=\rank(\OG_-)<12,$ and by assumption $a\leq r\leq 22-r$ we only need to exclude the case $a=r=11$ and $\delta=0$. For the 2-elementary lattice $\Lambda_+$ with invariants $((3, 21-r),22-r, 0)$ to exist, by Lemma \ref{lem: 2elem} (3) we see that $r\equiv 2\mod 4,$ a contradiction. It follows that conditions of Lemma \ref{split U} are always satisfied, and that there exists a 2-elementary lattice $\Gamma$ such that $\OG_+\cong \Gamma\oplus U.$ 

    Let $U_1:=U$ be such that $\OG_+=\Gamma\oplus U_1\hookrightarrow \OG$. 
	Denote by $L=(U_1)^\perp_{\OG}(-1);$ then $L$ is an even, indefinite lattice with signature $(20,2)$ and discriminant group $A_{L}\cong \bZ/3\bZ\cong A_{A_2}$. 
    By \cite[Cor. 1.13.3]{nikulin}, $L$ is unique up to isometry; thus we see that $$L\cong U^2\oplus E_8^2\oplus A_2.$$ Since $\iota$ acts as the identity on $\Gamma\oplus U_1$, $\iota$ restricts to an isometry of $L$ with 
	$$L_+\cong \Gamma(-1) \text{ and } L_-\cong\OG_-(-1).$$ Note that $\OG_{-}$ is negative definite of rank $r\leq 20$, and $\Gamma$ has signature $(2,20-r)$. We can choose a polarized Hodge structure $H$ of weight 4 on the lattice $L$, of type $(0,1,20,1,0)$, such that $$H^{2,2}\cap L=\OG_-(-1).$$ Notice this implies $H^{3,1}\subset L_+$. 
    By assumption, $\OG_{-}\cap\markman=\varnothing$; in particular, according to Lemma \ref{lem: characterisation long roots}, we know that $\OG_-$ contains no vectors $v$ such that $v^2=-2$ or $v^2=-6$ and $\textnormal{div}_{\OG_-}(v)=3$. 
    This condition assures that the Hodge structure $H$ defines a point in the image of the period map for smooth cubic fourfolds by \cite[Theorem 1.1]{laz10}.
    It follows that there exists a smooth cubic fourfold $V$ and a marking $\gamma\colon H^4(V,\bZ)_{prim}\to L$ such that $\gamma(A(V)_{prim}) = \OG_-(-1)$. For simplicity, in what follows, we identify $H^4(V, \bZ)_{prim}$ with $L$ via the marking $\gamma$.
    
	Let $\eta_V\in H^4(V,\bZ)$ be the square of the hyperplane class. We wish to extend $\iota|_{L}$ to an isometry of $H^4(V,\bZ)$ fixing $\eta_V$. 
	We have that $L\oplus \langle \eta_V\rangle \subset H^4(V,\bZ)$ is an index 3 overlattice; in order to extend with the identity on $\langle\eta_V\rangle$, $\iota|_{L}$ must act trivially on $A_{L}\cong \bZ/3\bZ\cong A_{\langle \eta_V\rangle}$ \cite[Corollary 1.5.2]{nikulin}. 
    By assumption, $\iota|_{L}$ acts by $-\textnormal{id}$ on $A_{L}$. Set $\sigma:=-\iota|_L$; notice now that for the action of $\sigma$ on $H^4(V,\bZ)_{prim}$ we have:
	$$(H^4(V,\bZ)_{prim})_-=L_+\cong \Gamma(-1), \text{ and } (H^4(V,\bZ)_{prim})_+=L_-\cong \OG_-(-1).$$
	Now $\sigma\oplus \textnormal{id}_{\langle \eta_V\rangle}$ defines an isometry of $H^4(V,\bZ)$ fixing $\eta_V$; let us denote this by $\sigma_V$. 
 Thus $\sigma_V\in \Aut_{HS}(V,\eta_V)$, and by the strong version of the Global Torelli theorem for cubic fourfolds \cite[Prop 1.3]{ZHENG_2019}, there exists a unique automorphism $\phi\in \Aut(V)$ such that $\sigma_V=\phi^*$. 
 Notice that $\sigma_V$ acts nontrivially on $H^{3,1}$; the involution $\phi$ is antisymplectic for the cubic fourfold $V$, and by Theorem \ref{marquandthm}:
	$$\OG_-\cong(H^4(V,\bZ)_{prim}(-1))_+=\begin{cases}
		E_6(-2),\\
		M(-1).
	\end{cases}$$ 

 Finally, we prove existence of an $OG10$ manifold $X$ admitting such an involution. In both cases above, $\OG_-$ is negative definite.  
 Since $\OG_-(-1)$ appears as an invariant lattice for an involution of a smooth cubic fourfold (Theorem \ref{marquandthm}), it follows that $\OG_-\cap \markman=\varnothing$ (Lemma \ref{lem: characterisation long roots}). 
 Hence by Theorem \ref{thm: no markman so sympl}, the involution is induced geometrically by a symplectic birational transformation $f\in \Bir(X)$ for some manifold $X$ of $OG10$ type. Further we see that such an involution $\iota$ necessarily acts by $-\textnormal{id}$ on the discriminant group $A_{\OG}$; if $\iota$ acted trivially, then $\OG_-$ would be 2-elementary, a contradiction by Proposition \ref{properties of Lneg}. The determination of $\OG_+$ in both cases follows.

 Let us conclude by remarking that, as in the proof of Theorem \ref{thm: prim embedding}, for both involutions of the statement of Theorem \ref{partial class} the lattice $\OG_+$ is unique in its genus and $O(\OG_+)\to O(A_{\OG_+})$ is surjective. Hence, again according to \cite[Corollary 1.5.2]{nikulin}, any involution of $\OG$ with $(\OG_+, \OG_-)$ as in the statement of Theorem \ref{partial class} is unique up to conjugacy in $O(\OG)$.
\end{proof}

\begin{remark}\label{remaining}
    It follows that if $f\in \Bir(X)$ is a symplectic birational involution with $\OG_+=U\oplus \Gamma$ for some lattice $\Gamma$, then $\rank(\OG_-)<12.$ Indeed, one applies the same argument as in Theorem \ref{partial class} to obtain an involution of a cubic fourfold (see also Theorem \ref{reduce to cubic}). Thus to conclude the proof of Theorem \ref{main thm}, it suffices to classify involutions with $\rank(\OG_-)\geq 12$, nontrivial action on $A_\OG$ and the assumption that $\OG_+$ does not split a $U$ summand. We complete this classification in Section \ref{sec: non-trivial > 12}.
\end{remark}

\section{Geometrical realisations via cubic fourfold}\label{sec:geo}

 In this section, we provide a geometrical realisation of the involutions (1)-(3) of Theorem \ref{main thm}, Table \ref{main table}.  All of these examples are obtained via involutions of some cubic fourfolds --- we show that such involutions produce induced symplectic birational involutions of the associated compactified intermediate Jacobian.

\begin{theorem}\label{reduce to cubic}
	Let $X$ be an irreducible holomorphic symplectic manifold of OG10 type. Let $f\in\Bir(X)$ be a symplectic birational involution of $X$, and suppose that $H^2(X,\bZ)_+\cong \Gamma\oplus U$ for some lattice $\Gamma$.
	Then there exists a smooth cubic fourfold $V$ with an involution $\phi$ whose action is determined by $f$. In particular, one of the following holds: $$H^2(X,\bZ)_-\cong \begin{cases}E_6(-2),\\
		E_8(-2),\\
		M(-1).
	\end{cases}$$
	
	Conversely, an involution $\phi$ of a smooth cubic fourfold $V$ induces a symplectic birational involution $f$ on the compactified associated Intermediate Jacobian $\calJ_V$, that leaves a copy of $U$ invariant and whose action is determined by $\phi$.
\end{theorem}

\begin{proof}
    Denote by $\iota:=\eta_*(f)\in O(\Lambda)$ the induced involution on $\Lambda.$ The existence of such a smooth cubic fourfold in the case of nontrivial action on the discriminant group is contained in the proof of Theorem \ref{partial class}. Thus we can assume $\iota$ acts trivially on the discriminant group $A_\OG.$ The argument follows identically to that of Theorem \ref{partial class} to obtain a smooth cubic fourfold $V$ with an involution $\iota$ of $H^4(V,\bZ)_{prim}$.
    We extend it to an involution of $H^4(V,\bZ)$ with the identity on $\langle \eta_V\rangle,$ where $\eta_V$ is the square of the hyperplane class. 
    Again, the strong version of the Global Torelli theorem for cubic fourfolds \cite[Prop 1.3]{ZHENG_2019} implies that there exists a unique $\phi\in \Aut(V)$ such that $\iota\oplus \textnormal{id}_{\langle \eta_V\rangle}=\phi^*$, with $\phi$ symplectic. By Theorem \ref{marquandthm}: $$(H^4(V,\bZ)_{prim})_-\cong E_8(2)$$ (see also \cite[Theorem 1.2 (1)]{laza2019automorphisms}). Thus necessarily $\OG_-\cong E_8(-2).$

	Conversely, suppose we have an involution $\phi\in\Aut(V)$ of a smooth cubic fourfold $V\subset \bP^5$; let $\sigma$ be the induced involution on $H^4(V,\bZ)_{prim}$. By (\cite{LSV}, \cite[Theorem 1.6]{sac2021birational}), we can associate to $V$ an irreducible holomorphic symplectic manifold $\calJ_V$ of OG10 type, with a Lagrangian fibration $\pi: \calJ_V\rightarrow \bP^5$ that compactifies the intermediate Jacobian fibration of $V$. Note that the compactification $\calJ_V$ is not unique; the cubic fourfold $V$ is a special cubic fourfold containing either a plane or a cubic scroll \cite{marquand2022cubic}, and so may have many birational compactifications, as discussed in \cite{sac2021birational}.
	Let $\Theta$ denote the relative theta-divisor of $\calJ_V$; then the sublattice $\langle \Theta, \pi^*\calO(1)\rangle$ is isomorphic to the hyperbolic lattice $U$ \cite[Lemma 3.5, communicated by K.~Hulek, R.~Laza]{sac2021birational}.
	
	To obtain an involution of $\calJ_V$, we follow \cite[Sect. 3.1]{sac2021birational}. The automorphism $\phi\in\Aut(V)$ acts on the universal family of hyperplane sections of $V$, and thus on the Donagi--Markman fibration $\mathcal{J}_U\rightarrow U,$ where $U\subset (\bP^5)^{*}$ parametrises smooth hyperplane sections of $V$ \cite[Section 3.1]{sac2021birational}. We thus obtain in this way a birational transformation $f:\calJ_V\dashrightarrow \calJ_V$, that leaves the sublattice $\langle \Theta, \pi^*\calO(1)\rangle\cong U$ invariant.
	If $\phi\in \Aut(V)$ is symplectic (i.e acts trivially on $H^{3,1}(V)$), then the induced birational involution $f\in \Bir(\calJ_V)$ is symplectic, by \cite[Lemma 3.2]{sac2021birational}. If not, there exists a regular antisymplectic involution $\tau\in \Aut(\calJ_V)$ given geometrically by sending $x\mapsto -x$ on the smooth fibers of $\calJ_V\rightarrow \bP^5$. Further this involution $\tau$ commutes with the induced antisymplectic involution $f\in \Bir(\calJ_V)$.  It follows that $\tau\circ f$ is a nontrivial symplectic birational involution of $\calJ_V$. Set $\widetilde{f}:=f$ if $f$ is symplectic, $\widetilde{f}:=\tau\circ f$ otherwise. Note that $\widetilde{f}$ leaves $\langle \Theta, \pi^*\calO(1)\rangle$ invariant in both cases. 
\end{proof}

\begin{remark}
    Fix the notation of the last paragraph of the proof of Theorem \ref{reduce to cubic}. If $\phi$ and thus $f$ is antisymplectic, then the symplectic birational involution $\widetilde{f}$ acts by $-\textnormal{id}$ on the discriminant group of $\OG$.
\end{remark}

\section{Nontrivial action on discriminant, rank \texorpdfstring{$\geq 12$}{>=12}}\label{sec: non-trivial > 12}

As noted in Remark \ref{remaining}, it remains to classify symplectic birational involutions of manifold of $OG10$ type that act nontrivially on $A_\Lambda$, with $\rank(\OG_-)\geq 12$ and the additional assumption that $\OG_+$ does not split a $U$ summand. 
The main aim of this section is to prove the following result:
\begin{theorem}\label{non-trivial non split}
	Let $X$ be a manifold of $OG10$ type, and $f\in \Bir(X)$ a symplectic birational involution such that the induced action $\eta_*(f)$ is nontrivial on the discriminant group $A_\OG$. Assume further that $$\rank\OG_-\geq 12.$$ Then one of the following holds:
    \begin{enumerate}
        \item $\OG_-\cong G_{12}$ and $\OG_+\cong \langle 2\rangle^3\oplus\langle -2\rangle^ 9$; or
        \item $\OG_-\cong G_{16}$ and $\OG_+\cong \langle 2\rangle^3\oplus\langle -2\rangle^ 5$.
    \end{enumerate}
 Here, $G_{12}$ and $G_{16}$ are the lattices listed in the last column of Table \ref{table enum}.

 Moreover, each involution of $\Lambda$ listed above is unique up to conjugacy in $O(\OG)$, and there exists a manifold of $OG10$ type with a birational involution inducing such an isometry.
\end{theorem}

We prove Theorem \ref{non-trivial non split} as follows: in Section \ref{sec: genus} we classify the possible genera of the coinvariant lattice $\OG_-$ for an involution $\iota\in O(\Lambda)$ with nontrivial action on $A_\OG$, and such that $\OG_-$ has rank at least 12, is negative definite and the lattice $\OG_+$ does not split a $U$ summand.
We obtain 12 possible genera for the lattice $\OG_-$. 
For each genera, we enumerate representatives for all the isometry classes of lattices in this genus. 
We outline the process in Section \ref{sec: enumeration} (more details are contained in Appendix \ref{appendix:A}). 
We then investigate whether each candidate coinvariant lattice occurs for a symplectic birational involution of a manifold of $OG10$ type. 
By Theorem \ref{thm: no markman so sympl} and Lemma \ref{lem: characterisation long roots}, this is equivalent to verifying whether the potential coinvariant lattice $\OG_-$ contains no vectors $v$ such that $v^2=-2$, or $v^2=-6$ with $\textnormal{div}_{\OG_-}(v)\in 3\bZ$.

Our results of this enumeration and analysis are summarised in Table \ref{table enum} in Section \ref{sec: results}; for more explicit details see the database \cite{marquand_lisa_2023_7528193}. 

\subsection{Genus of the remaining possible cases}\label{sec: genus}
We classify the possible genera of the coinvariant lattice $\OG_-$ for an involution $\iota\in O(\OG)$ with nontrivial action on $A_{\OG}$, such that $\OG_-$ is negative definite, and such that  $\OG_+$ does not split a $U$ summand.

\begin{proposition}\label{no U summand}
	Let $\iota\in O(\OG)$ acting nontrivially on $A_{\OG}$ such that $\OG_{-}$ is negative definite. Assume that $\OG_+$ does not split a $U$ summand. Let $r:=\rank\OG_{-}.$ Then one of the following holds:
	\begin{enumerate}
		\item $\OG_+\cong U(2)^3$ and $r=18$;
		\item $\OG_+\cong U(2)^3\oplus D_4$ and $r=14$; or
		\item $\OG_+\cong\langle 2\rangle^3\oplus \langle -2\rangle^{21-r}$ and $r\geq 12$. 
	\end{enumerate}
\end{proposition}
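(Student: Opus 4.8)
The plan is to leverage the structural constraints already assembled: since $\iota$ acts by $-\id$ on $A_{\OG}$, Proposition \ref{properties of Lneg}(2) tells us $(\OG)_+$ is a $2$-elementary lattice of signature $(3, 21-r)$, hence determined by the invariants $((3,21-r), a, \delta)$, and Nikulin's seven numerical constraints (1)--(7) listed before Lemma \ref{split U} apply. The hypothesis that $(\OG)_+$ does \emph{not} split a $U$ summand is, by Lemma \ref{split U}, precisely the negation of: [$r\le 20$ and $a\le 22-r$] together with [if $a=22-r,\delta=0$ then $r\equiv 2\bmod 8$]. First I would split into the mutually exclusive cases this negation produces: (i) $r=21$; (ii) $r\le 20$ but $a\ge 23-r$; (iii) $r\le 20$, $a=22-r$, $\delta=0$, and $r\not\equiv 2\bmod 8$. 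In each branch, combine the failure condition with Nikulin's constraints to pin down $(r,a,\delta)$ to a short finite list, then identify the resulting $2$-elementary lattice explicitly (it is unique in its genus because $(\OG)_+$ is indefinite, having signature $(3,\ast)$).

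For branch (i), $r=21$ forces $(\OG)_+$ to have signature $(3,0)$, so it is positive definite of rank $3$; but then one must also recall $(\OG)_-$ is negative definite of rank $21$ with $(\OG)_-\cap\markman=\varnothing$ — I expect this case to land inside (3) with $r=21$, giving $(\OG)_+\cong\langle 2\rangle^3$, after checking $a=3$, $\delta=1$ are forced (use constraint (2): $a\equiv r\equiv 1\bmod 2$; use $a\le\min\{24-r,r\}=3$; rule out $a=1$ by constraint (5) since $21\not\equiv 3\bmod 8$). For branch (ii), the inequality $a\ge 23-r$ combined with $a\le\min\{24-r,r\}$ leaves only $a\in\{23-r,24-r\}$ and forces $r\ge 12$ (so that $a\le r$ is compatible); I would then run through constraints (1)--(7) to see which $(r,a,\delta)$ survive — I anticipate $a=24-r,\delta=0$ gives the $U(2)^3$ and $U(2)^3\oplus D_4$ cases (these are exactly the $2$-elementary lattices of signature $(3,21-r)$ with $a=24-r$, $\delta=0$, forcing $r\equiv 2\bmod 8$ by constraint (7), i.e. $r\in\{18,10,\dots\}$, intersected with $r\ge 12$ gives $r=18$; and $a=23-r,\delta=1$ — or the remaining value — should yield the $\langle 2\rangle^3\oplus\langle -2\rangle^{21-r}$ family, which is the generic $\delta=1$ $2$-elementary lattice with $a=21-r$... wait, more carefully: $\langle 2\rangle^3\oplus\langle-2\rangle^{21-r}$ has $a = 21-r+3 = 24-r$; so I'd need $a=24-r,\delta=1$, which is consistent for $r\ge 12$). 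The $D_4$ case requires recognising $U(2)^3\oplus D_4$ as the unique $2$-elementary lattice of signature $(3,7)$, $a=10$, $\delta=0$; here I'd compute its genus symbol (as in the proof of Proposition \ref{D12}) to confirm. For branch (iii), $\delta=0$ together with constraint (3) gives $r\equiv 2\bmod 4$, and excluding $r\equiv 2\bmod 8$ leaves $r\equiv 6\bmod 8$; with $a=22-r$ and $a\le r$ this forces $r\ge 11$, hence $r\in\{14,22,\dots\}$, and $r\le 20$ gives $r=14$, landing in case (2) with $(\OG)_+\cong U(2)^3\oplus D_4$ — one checks $a=22-14=8$... hmm, but $U(2)^3\oplus D_4$ has $a=10\ne 8$, so I'd need to recheck; possibly branch (iii) with $r=14$ produces $U(2)^3\oplus D_4$ only after an index-$2$ overlattice adjustment, or the count $a=22-r$ must be reconciled — this bookkeeping is exactly where care is needed.

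The main obstacle I expect is precisely this last point: correctly matching each surviving invariant triple $((3,21-r),a,\delta)$ to a \emph{named} lattice and verifying the three listed forms are the only survivors, rather than the purely mechanical application of Nikulin's constraints. The arithmetic is routine but error-prone, so I would organise it as a table over admissible $r$ (noting $a\equiv r\bmod 2$ and $a\le\min\{r,24-r\}$ sharply limit the range), and for each candidate confirm the lattice identification by computing discriminant forms / genus symbols on Hecke \cite{nemo} exactly as in Propositions \ref{no embedding} and \ref{D12}. Finally, one must check in each of the three cases that the required negative-definite $(\OG)_-$ with the prescribed discriminant form (so that $(\OG)_- \oplus (\OG)_+ \hookrightarrow \OG$ exists as a primitive sublattice via \cite[Prop. 1.15.1]{nikulin}, with glue matching $A_{\OG}\cong\bZ/3\bZ$) is non-empty — i.e. that the genus of $(\OG)_-$ is non-empty — which is what makes these \emph{possible} cases; the enumeration within these genera and the geometric exclusion are deferred to \S\ref{enumerate and exclude}.
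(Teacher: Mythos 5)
Your overall route is the same as the paper's: negate Lemma \ref{split U}, split into the branches $r=21$, $a>22-r$, and $a=22-r,\ \delta=0,\ r\not\equiv 2 \bmod 8$, run Nikulin's numerical constraints, and use the fact that an indefinite $2$-elementary lattice is determined by $((3,21-r),a,\delta)$ to name the survivors; your treatment of $r=21$ also matches the paper's. The only methodological difference is in identifying the lattices in the $a=24-r$ branch: since there $l(A_M)=\mathrm{rank}(M)$, the paper passes to the unimodular lattice $N:=M(1/2)$ and applies Milnor's classification, giving $N\cong U^3$ when $\delta=0$ (which forces $r\equiv 2\bmod 8$, hence $r=18$ and $M\cong U(2)^3$) and $N\cong\langle 1\rangle^3\oplus\langle -1\rangle^{21-r}$ when $\delta=1$ (hence $M\cong\langle 2\rangle^3\oplus\langle -2\rangle^{21-r}$); your plan of comparing genus symbols accomplishes the same identification.

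The genuine gap is in your handling of the $r=14$ case, and it stems from a wrong invariant: you assert that $U(2)^3\oplus D_4$ has $a=10$, but $\det(D_4)=4$ and $A_{D_4}\cong(\bZ/2\bZ)^2$, so $l(A_{U(2)^3\oplus D_4})=6+2=8$ with $\delta=0$. Consequently this lattice belongs to your branch (iii), not branch (ii): with $a=22-r$, $\delta=0$, $r\equiv 6\bmod 8$ and $12\le r\le 20$ one gets $r=14$, $a=8$, and $U(2)^3\oplus D_4$ is the unique indefinite $2$-elementary lattice with invariants $((3,7),8,0)$, which closes case (2) of the proposition. As written, your proposal both misattributes the $D_4$ case to branch (ii) (where $\delta=0$ yields only $U(2)^3$ with $r=18$) and leaves branch (iii) hanging with an explicit ``need to recheck''; once the discriminant group of $D_4$ is corrected, the bookkeeping closes exactly as in the paper. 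Finally, your last paragraph about gluing a suitable $(\OG)_-$ to $(\OG)_+$ inside $\OG$ is not needed here: the proposition only constrains $(\OG)_+$, and the existence question is deferred to the enumeration in \S\ref{enumerate and exclude}.
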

\begin{proof}
	For ease of notation, let $M:=\OG_+$.
	Since $M$ does not split a $U$ summand, the first part of the proof of Theorem \ref{partial class} gives that $r\geq 12$. Assume first that $r\neq 21$: by Proposition \ref{properties of Lneg}, $M$ is a 2-elementary lattice of signature $(3, 21-r)$. It is therefore uniquely determined, up to isometry, by the invariants $(r, a,\delta)$ (Lemma \ref{lem: 2elem}).  There are two cases to consider by Lemma \ref{split U}: either $22-r<a$, or $a=22-r$,  $\delta=0$ and $r\not\equiv 2\mod 8$. 
	
	\textbf{Case 1:} Assume that $22-r<a$; we necessarily have that $22-r<a\leq 24-r$. Since $M$ is 2-elementary, we have that $a\equiv r\mod 2$; we can exclude the case $a=23-r$. Thus $a=24-r=\rank M$. The lattice $N:=M(1/2)$ is well defined \cite[Lemma A.7]{marquand2022cubic}. Further, $A_N=\{1\}$, and so $N$ is unimodular. 
	
	Assume that $\delta=0$; this implies that $N$ is an even unimodular lattice (see for example \cite[Lemma A.9]{marquand2022cubic}). By Milnor's theorem on unimodular forms (see \cite[Thm 0.2.1]{nikulin} for a precise statement), $N$ exists if and only if 
	\begin{align*}
		3+r-21&\equiv 0\mod 8;\\
		r&\equiv 2\mod 8.
	\end{align*} 
	Since $r\geq 12$, we have that $r=18$. Thus $N$ has signature $(3,3)$, and hence $N\cong U^3$. Thus $M\cong U(2)^3.$
	
	Now assume that $\delta=1$. It follows that $N$ is an odd indefinite unimodular lattice (see for example \cite[Lemma A.8]{marquand2022cubic}). By Milnor's theorem again, $N$ exists and is isomorphic to $\langle 1\rangle^3\oplus\langle -1\rangle^{21-r}$, thus 
	$$M\cong \langle 2\rangle ^3\oplus \langle -2\rangle^{21-r}.$$
	
	\textbf{Case 2:} Assume that $a=22-r$, with $\delta=0$ and $r\neq 2 \mod 8$.
	Note again that $r\geq 12$; if $r\leq 11$, since $22-r=a\leq r\leq 22-r$, we must have that $r=11$. But since $\delta=0$, for $M$ to exist $r\equiv 2 \mod 4,$ a contradiction.
	
	So $r\geq 12$, and since $r\equiv 2\mod 4$, $r\in \{14, 18\}$. By assumption, $r\neq 2\mod 8$, thus $r=14.$
	Hence $M$ has signature $(3,7)$ with $a=8$ and $\delta=0$. Consider the lattice $U(2)^3\oplus D_4$; it has the same signature and invariants. Since indefinite 2-elementary lattices are unique up to isometry, we necessarily have 
	$M\cong U(2)^3\oplus D_4.$
	
	Finally, assume that $r=21$. In this case, $M$ has signature $(3,0)$. Since $a\equiv r\mod 2$, we have $a=1,\,3$. By Lemma \ref{lem: 2elem}, Item (3), the case $a=1$ cannot occur; thus $a=3$. Again, the lattice $N:=M(1/2)$ is well defined. Further, $A_N=\{1\},$ so $N$ is unimodular. Since there are no even unimodular lattices of rank 3, by Milnor's result, we must have that $\delta=1$ and $N$ is an odd unimodular lattice. Thus $N\cong \langle 1\rangle^3$, and 
	$M\cong \langle 2\rangle^3.$
\end{proof}

We now list the possible genera of $\OG_{-}$ for the involutions above.
\begin{corollary}\label{cases to enumerate}
	Let $\iota\in O(\OG)$ acting nontrivially on $A_{\OG}$ such that $\OG_{-}$ is negative definite. Assume that $\OG_+$ does not split a $U$ summand. Let $r:=\rank\OG_{-}.$ Then the discriminant group is $A_{\OG_-}=\bZ/3\bZ\oplus(\bZ/2\bZ)^a$, and $q_{\OG_-}|_{\bZ/3\bZ}= q_{A_2(-1)}$. Let $\delta=0$ if the quadratic form of $\OG_+$ takes values in $\bZ/2\bZ$, $\delta=1$ otherwise. Further, the invariants $(r, a, \delta)$ are as follows:
	\begin{enumerate}
		\item $(r, a , \delta) = (18, 6, 0)$
		\item $(r, a, \delta)=(14, 8, 0)$
		\item $(r, a, \delta)= (r, 24-r, 1)$ for $12\leq r\leq 21$.
	\end{enumerate}
\end{corollary}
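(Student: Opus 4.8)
The statement to prove is Corollary~\ref{cases to enumerate}: it merely spells out the invariants $(r,a,\delta)$ of the genus of $(\OG)_{-}$ corresponding to each of the three cases in Proposition~\ref{no U summand}. The plan is to read off the genus of $(\OG)_{-}$ directly from the genus of $(\OG)_{+}$ via the standard theory of discriminant forms, using the fact that $(\OG)_{-}$ and $(\OG)_{+}$ are orthogonal complements inside the unimodular lattice $\OG$ (after splitting off the $A_2$-part of the discriminant). So the first step is to recall that, since $\OG = U^3\oplus E_8^2\oplus A_2$, the discriminant form is $q_{A_2}$, and by Nikulin's theory (\cite[Prop.~1.6.1, Cor.~1.5.2]{nikulin}) one has $A_{(\OG)_{+}}\oplus A_{(\OG)_{-}} \supset$ a glue group with the property that modulo the glue, $q_{(\OG)_{-}} \cong -q_{(\OG)_{+}'}$ where $(\OG)_{+}'$ is the part of $(\OG)_{+}$ paired against $(\OG)_{-}$. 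Concretely, because $\iota$ acts by $-\mathrm{id}$ on $A_{\OG}=\bZ/3\bZ$, the $3$-torsion of $A_{\OG}$ sits inside $A_{(\OG)_{-}}$ and contributes the summand $\bZ/3\bZ$ with form $q_{A_2}$; the $2$-elementary part of $A_{(\OG)_{-}}$ is glued isometrically (up to sign) to $A_{(\OG)_{+}}$, which is $2$-elementary of length $a=l(A_{(\OG)_{+}})$.

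\textbf{Main computation.} Next I would go case by case through Proposition~\ref{no U summand}. In case (3), $(\OG)_{+}\cong \langle 2\rangle^3\oplus\langle -2\rangle^{21-r}$ has rank $24-r$ and is $2$-elementary with $A_{(\OG)_{+}}\cong(\bZ/2\bZ)^{24-r}$, so $a=24-r$; moreover $\langle 2\rangle$ and $\langle -2\rangle$ each contribute a form taking odd values, so $\delta=1$ for $(\OG)_{+}$ and hence $\delta=1$ for $(\OG)_{-}$ by the convention fixed in Lemma~\ref{existence of prim embedding} and Proposition~\ref{no U summand}; the rank of $(\OG)_{-}$ is $r$, and $r$ ranges over $12\leq r\leq 21$ exactly as in Proposition~\ref{no U summand}. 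This gives case (3) of the corollary. In case (1), $(\OG)_{+}\cong U(2)^3$ with $r=18$: here $A_{(\OG)_{+}}\cong(\bZ/2\bZ)^6$ so $a=6$, and $q_{U(2)}$ takes values in $\bZ/2\bZ$ so $\delta=0$, giving $(r,a,\delta)=(18,6,0)$. In case (2), $(\OG)_{+}\cong U(2)^3\oplus D_4$ with $r=14$: the discriminant is $A_{U(2)^3}\oplus A_{D_4}\cong(\bZ/2\bZ)^6\oplus(\bZ/2\bZ)^2$ so $a=8$, and both $q_{U(2)}$ and $q_{D_4}$ are even-valued so $\delta=0$, giving $(r,a,\delta)=(14,8,0)$. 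In each case the discriminant group of $(\OG)_{-}$ is then $\bZ/3\bZ\oplus(\bZ/2\bZ)^a$, with the $\bZ/3\bZ$-part carrying $q_{A_2}$ because that is the part of $A_{\OG}$ untouched by the gluing with $(\OG)_{+}$ (whose discriminant is pure $2$-torsion).

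\textbf{Expected obstacle.} This is essentially a bookkeeping statement, so there is no deep obstacle; the only point requiring a little care is the determination of $a=l(A_{(\OG)_{-}})$ and $\delta$ in case~(2), where one must check that adjoining the $D_4$-summand to $U(2)^3$ raises the length of the $2$-elementary part from $6$ to $8$ (rather than, say, causing cancellation in the glue), and that $\delta$ stays $0$. This follows from $A_{D_4}\cong(\bZ/2\bZ)^2$ with even discriminant form and from the fact that the gluing between $(\OG)_{+}$ and $(\OG)_{-}$ inside the unimodular lattice $\OG$ forces $A_{(\OG)_{-}}$ (after removing the $\bZ/3\bZ$) to be isometric, up to sign, to $A_{(\OG)_{+}}$; hence $l$ and $\delta$ are transported verbatim. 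I would also remark that $q_{(\OG)_{-}}|_{\bZ/3\bZ}=q_{A_2}$ is immediate since $q_{\OG}=q_{A_2}$ and the complement $(\OG)_{+}$ is $2$-elementary, so the $3$-part of the discriminant form is entirely inherited by $(\OG)_{-}$. This completes the proof; the enumeration of isometry classes within each of these $12$ genera is carried out in the remainder of the section.
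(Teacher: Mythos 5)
Your proposal is correct and follows essentially the same route as the paper, which simply reads the invariants off Proposition~\ref{no U summand}; you have just made explicit the standard discriminant-form bookkeeping (the $2$-part of $A_{(\OG)_-}$ is anti-isometric to $A_{(\OG)_+}$ via the glue, while the $\bZ/3\bZ$-part with $q_{A_2}$ lands in $A_{(\OG)_-}$ because $(\OG)_+$ is $2$-elementary) that the paper leaves implicit. One small slip: $\OG$ is not unimodular (its discriminant group is $\bZ/3\bZ$ with form $q_{A_2}$), but your argument never actually uses unimodularity — it is precisely this nontrivial $3$-torsion discriminant that forces the $\bZ/3\bZ$ summand onto $(\OG)_-$, exactly as you compute.
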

\begin{proof}
	This follows immediately from Proposition \ref{no U summand}.
\end{proof}

 In order to conclude our classification of symplectic birational involutions for manifolds of $OG10$ type, it remains to be seen whether an involution $\iota\in O(\OG)$ as in Proposition \ref{no U summand} is induced by a symplectic birational involution. 
 By Theorem \ref{thm: no markman so sympl} and Lemma \ref{lem: characterisation long roots}, we need to determine whether $\OG_{-}$ contains any vector $v$ such that $v^2=-2$, or $v^2=-6$ with $\textnormal{div}_{\OG_-}(v)\in 3\bZ$. 
 One possible strategy is to classify the possibilities for the lattices $\OG_{-}$. Unfortunately, these lattices are not unique in their respective genus, and have both large rank and discriminant (the methods of Conway--Sloane have not been extended \cite{MR965484}). We undertake this enumeration in the next subsection \ref{sec: enumeration}, but first we illustrate this difficulty with two examples.

\begin{example}
	Consider $\OG_+\cong U(2)^3$. Then $\OG_{-}$ has rank $18$, and discriminant group $$A_{\OG_{-}}\cong \bZ/3\bZ\times (\bZ/2\bZ)^6,$$ and $q_{\OG_{-}}|_{\bZ/3\bZ}=q_{A_2(-1)}.$ There are two easily identifiable possibilities for $\OG_{-}$: 
	\begin{align*}
		A_2(-1)&\oplus K;\\
		A_2(-1)&\oplus E_8(-1)\oplus N,
	\end{align*} where $K$ is the Kummer lattice and $N$ is the Nikulin lattice (see \cite{MR728142} for a description of these lattices). Both of these embed into the lattice $\OG$ and are orthogonal to $U(2)^3$. Although both examples contain vectors of square $-2$ and thus cannot be realised by a geometric birational involution, there may be other lattices in the same genus without vectors of square $-2$, or of square $-6$ and divisibility $3$.
\end{example}

\begin{example}
	Consider $\OG_+\cong U(2)^3\oplus D_4(-1)$. Then $\OG_-$ has rank 14, and discriminant group
	$$A_{\OG_-}\cong \bZ/3\bZ\times (\bZ/2\bZ)^8.$$
	Thus $\OG_-$ is in the same genera as the lattice $A_2(-1)\oplus N\oplus D_4(-1)$. Again this example contains $(-2)$-vectors.
\end{example}

\subsection{The enumeration}\label{sec: enumeration}

By Corollary \ref{cases to enumerate}, we have 12 possible genera to enumerate. This process is computer aided; we use a modified version of Kneser's neighbour method, first described in \cite{Kneser1957KlassenzahlenDQ}. Details about the application of this method for quadratic lattices over totally real number fields and its algorithmic implementation can be found in \cite[\S 2]{scharlau}. The main idea is that for each genus $g$ of integral definite lattices and for some suitable prime number $p$, there exists a so-called \emph{$p$-neighbour graph} \cite[Page 742]{scharlau}, which we denote here by $\text{Kne}_p(g)$, whose nodes represent the isometry classes of lattices in $g$. Two nodes of $\text{Kne}_p(g)$ are connected by an edge if for respective representatives $L,\, L'$ of the corresponding isometry classes, $L$ and $L'$ are \emph{$p$-neighbours} \cite[(28.2)]{KneserMartin2002QF}. The enumeration of the isometry classes in $g$ can be done by walking through $\text{Kne}_p(g)$ and iteratively computing representatives for all the nodes. We refer to Appendix \ref{appendix:A} for more details on Kneser's algorithm and its modified implementation. 

We represent each isometry class of lattices obtained by the previous enumeration by their Gram matrix, which are available in the files \cite{marquand_lisa_2023_7528193}. Each of them represent a coinvariant lattice $\OG_-$ for an involution $\iota\in O(\OG)$. It remains to verify whether they are induced from a geometric involution or not.

 Let $L$ be one of the lattices enumerated; $L$ is the coinvariant lattice for an involution $\iota\in O(\OG)$. We verify whether $L$ is induced by a geometric involution by verifying if $L$ contains any vector $v$ such that $v^2=-2$, or $v^2=-6$ with $\textnormal{div}_{L}(v)\in 3\bZ$. 

To check whether $L$ contains vectors of square $-2$ or $-6$, one can use the method \textbf{short\_vectors} on Hecke \cite{nemo}, which  allows us to compute all vectors in $L$ of a given square. For each such vector $v\in L$ such that $v^2=-6$, computing the positive generator $d$ of the $\mathbb{Z}$-ideal $(v, L)$ is a simple routine which goes back to standard linear algebra (see also \cite[\textbf{divisibility}]{nemo}). 

Finally, if $L$ contains no vector $v$ such that $v^2=-2$, or $v^2=-6$ with $\textnormal{div}_{L}(v)=3$, then by Theorem \ref{thm: no markman so sympl} and Lemma \ref{lem: characterisation long roots}, we know that the involution $\iota$ is induced, and $L$ is isometric to the coinvariant lattice associated to a symplectic birational involution on an irreducible symplectic manifold of $OG10$ type.

\subsection{Proof of Theorem \ref{non-trivial non split}}\label{sec: results}
We give in Table \ref{table enum} the results of our enumeration and the analysis outlined above. For each possible genus, we give the number $N$ of isometry classes of lattices it contains. In the column \textsc{with roots} we record the number of classes that  have a representative with $(-2)$-vectors. In the column \textsc{without roots, \textbf{\textsc{but}} with $(-6,3)$} we record how many classes have a representative $L$ without any $(-2)$-vectors but with at least one vector $v\in L$ such that $v^2=-6$ and $\textnormal{div}_L(v) \in 3\bZ$. Finally, the last column \textsc{geometric cases} presents all possible isometry classes that are induced by a symplectic birational involution of a manifold $X$ of $OG10$ type, and thus are isometric to $H^2(X, \mathbb{Z})_-$. 

\begin{proof}[Proof of Theorem \ref{non-trivial non split}]
    Let $f\in \Bir(X)$ satisfy the assumptions of Theorem \ref{non-trivial non split} and let $\iota:=\eta_\ast(f)\in O(\OG)$. Then, according to Proposition \ref{no U summand}, Corollary \ref{cases to enumerate} and Table \ref{table enum}, we know that the pair $(\Lambda_+, \Lambda_-)$ is as wanted. Similarly, by \cite[Proposition 1.15.1]{nikulin}, and the verification from Section \ref{sec: enumeration}, we know that any involution of $\OG$ with $(\Lambda_+, \Lambda_-)$ as in the statement of Theorem \ref{non-trivial non split} is induced.

    To conclude, similarly to the proof of Theorem \ref{thm: prim embedding}, we observe that according to \cite[Theorem 1.14.2]{nikulin}, for $\OG_+$ as in the statement of Theorem \ref{non-trivial non split} we have that $O(\OG_+)\to O(A_{\OG_+})$ is surjective. Therefore, \cite[Corollary 1.5.2]{nikulin} tells us that any involution of $\OG$ with $(\Lambda_+, \Lambda_-)$ as in the statement of Theorem \ref{non-trivial non split} is unique up to conjugacy.
\end{proof}

\definecolor{lg}{gray}{0.9}
\newcolumntype{g}{>{\columncolor{lg}}c}
\renewcommand\arraystretch{1.3}
\begin{table}[!t]
	\centerline{
		\begin{tabular}{|c|g||g|g|g|g|}
			
			\hline
			\rowcolor{white}
			&&&&\textsc{without roots,}&\\
			\rowcolor{white}
			\multirow{-2}{*}{\textsc{Case}}&\multirow{-2}{*}{\textsc{genus}}&\multirow{-2}{*}{$N$}&\multirow{-2}{*}{\textsc{with roots}}&\textsc{\textbf{\textsc{but}} with $(-6,3)$}&\multirow{-2}{*}{\textsc{geometric cases}}\\
			
			\hline
			\rowcolor{white}
			(1)&$\textnormal{II}_{(0,18)}2^{+6}_{\textnormal{II}}3^{+1}$&430&430&0&None\\
			
			\hline
			(2)&$\textnormal{II}_{(0,14)}2^{-8}_{\textnormal{II}}3^{+1}$&21&21&0&None\\
			
			\hline
			\rowcolor{white}
			&$\textnormal{II}_{(0,12)}2^{-12}_23^{+1}$&5&4&0&1: $G_{12}$\\
			
			\hhline{~|-|-|-|-|-}
			&$\textnormal{II}_{(0,13)}2^{-11}_13^{+1}$&23&22&1&None\\
			
			\hhline{~|-|-|-|-|-}
			\rowcolor{white}
			&$\textnormal{II}_{(0,14)}2^{-10}_03^{+1}$&70&70&0&None\\
			
			\hhline{~|-|-|-|-|-}
			&$\textnormal{II}_{(0,15)}2^{-9}_73^{+1}$&211&211&0&None\\
			
			\hhline{~|-|-|-|-|-}
			\rowcolor{white}
			&$\textnormal{II}_{(0,16)}2^{-8}_63^{+1}$&617&616&0&1: $G_{16}$\\
			
			\hhline{~|-|-|-|-|-}
			&$\textnormal{II}_{(0,17)}2^{-7}_53^{+1}$ &1291&1291&0&None\\
			
			\hhline{~|-|-|-|-|-}
			\rowcolor{white}
			&$\textnormal{II}_{(0,18)}2^{-6}_43^{+1}$&2524&2524&0&None\\
			
			\hhline{~|-|-|-|-|-}
			& $\textnormal{II}_{(0,19)}2^{-5}_33^{+1}$&3682&3682&0&None\\
			
			\hhline{~|-|-|-|-|-}
			\rowcolor{white}
			&$\textnormal{II}_{(0,20)}2^{-4}_23^{+1}$&3375&3375&0&None\\
			
			\hhline{~|-|-|-|-|-}
			\multirow{-10}{*}{(3)}&$\textnormal{II}_{(0,21)}2^{-3}_13^{+1}$&1316&1316&0&None\\
			
			\hline
		\end{tabular}
	}
	\caption{Genus enumeration and geometric cases of Theorem \ref{non-trivial non split} }\label{table enum}
\end{table}
The two lattices admitting a geometric realization from Table \ref{table enum} are determined respectively by the following Gram matrices: 

\[\resizebox{\textwidth}{!}{
	$G_{12}= \begin{bmatrix}
		-4 &2 &-2 &-2 &-2& -2 &2& -2 &2& 2 &-2 &-2\\
		2 &-4 &2 & 0 & 2&0 & -2 &2& -2& -2 &2 &2\\
		-2 &2 &-4 &-2 &-2& -2& 2& -2 &2& 2 &-2 &-2\\
		-2 &0&-2 &-4& -2& -2& 0& 0& 0& 0& 0& 0\\
		-2 &2 &-2& -2& -4& -2 &2& -2& 2& 2 &-2& -2\\
		-2 &0& -2 &-2& -2& -4& 2& -2& 2& 2 &-2 &-2\\
		2& -2& 2 &0& 2& 2& -4& 2& -2& -2 &2 &2\\
		-2 &2& -2& 0& -2& -2 &2 &-6& 4 &4& -2& -4\\
		2 &-2& 2 &0& 2 &2& -2& 4 &-6& -2 &4& 4\\
		2& -2 &2& 0& 2 &2 &-2& 4& -2& -6& 2& 2\\
		-2& 2 &-2 &0&-2 &-2 &2 &-2 &4 &2& -6& -2\\
		-2 &2& -2 &0& -2& -2 &2& -4& 4& 2 &-2& -6
	\end{bmatrix}$,\; 
	$G_{16} =\begin{bmatrix}
		-4 &2 &-2 &2 &-1 &1 &2 &1 &2 &1 &-2 &1 &-2 &-2 &1 &-2\\
		2 &-4 &0 &-1 &2 &-2 &0 &-2& 0 &-2 &0 &-2 &1 &2 &1 &1\\
		-2 &0 &-4 &2 &1 &1 &1 &-1 &2 &-1 &-1& 1 &-1 &-1& 2& -2\\
		2 &-1 &2 &-4 &-1 &-2 &-2 &-1 &0 &1 &1 &0 &2 &0 &-2& 1\\
		-1 &2 &1 &-1 &-4 &1 &-1 &2& 1 &2 &1 &2 &1 &-1 &-2 &0\\
		1 &-2 &1 &-2 &1& -4 &-1 &-2 &0& -1 &-1& -2 &1& 1& -1 &1\\
		2 &0 &1 &-2 &-1 &-1 &-4&-1 &0 &0 &2 &1 &2 &1& -2 &1\\
		1 &-2 &-1 &-1 &2 &-2& -1 &-4& 0& -2 &-1& -1 &1& 0 &1 &0\\
		2 &0 &2 &0 &1 &0 &0 &0 &-4 &-1 &0 &-1 &1& 1& -1 &2\\
		1 &-2 &-1 &1& 2& -1 &0 &-2& -1& -4& 0& -2& 0 &2 &1 &0\\
		-2 &0 &-1 &1 &1& -1 &2& -1& 0& 0& -4 &-1 &-1 &-1& 1 &0\\
		1 &-2& 1 &0& 2& -2& 1 &-1 &-1 &-2& -1& -4& -1& 2 &1& 1\\
		-2 &1 &-1& 2 &1 &1 &2 &1 &1 &0& -1 &-1 &-4 &0 &2& -2\\
		-2 &2 &-1 &0& -1 &1 &1& 0& 1 &2 &-1 &2 &0 &-4 &0 &-1\\
		1 &1& 2& -2 &-2& -1 &-2& 1& -1 &1& 1 &1 &2 &0 &-4 &1\\
		-2 &1 &-2 &1 &0 &1 &1 &0 &2 &0 &0 &1 &-2 &-1& 1 &-4 
	\end{bmatrix}$}.
\]

They correspond respectively to the 3rd and the 472nd lattices of the respective cases \textbf{c3r12} and \textbf{c3r16} of our database, available in \cite{marquand_lisa_2023_7528193}. We moreover display in Table \ref{case3rank12} the Gram matrices for 5 representatives of the isometry classes in the genus of $G_{12}$ (only the nonzero entries are shown).

\appendix
\section{Enumeration of definite genera for \texorpdfstring{$\mathbb{Z}$}{Z}-lattices}\label{appendix:A}

The \emph{genus} of a lattice is the collection of all full rank integral lattices in a given regular quadratic space which are pairwise locally isometric at each place of $\mathbb{Q}$. We usually refer to a lattice by its isometry class, and we represent a genus by the isometry classes of the lattices contained in it. By \cite[Satz (21.3)]{KneserMartin2002QF}, a genus consists of finitely many isometry classes of lattices, though the genus itself is infinite. In order to continue the classification of symplectic birational involutions of manifolds of OG10 type, via Theorem \ref{thm: no markman so sympl}, we want to enumerate the following 12 genera given by Corollary \ref{cases to enumerate} (the notation follows Conway--Sloane convention for genus symbols, see \cite[Chapter 15]{splg}):
\begin{enumerate}
	\item $g = \text{II}_{(0, 18)}2^{+6}_{\text{II}}3^{+1}$;
	\item $g = \text{II}_{(0,14)}2^{-8}_{\text{II}}3^{+1}$;
	\item $g = \text{II}_{(0, r)}2^{-(24-r)}_{\delta}3^{+1}$ for $12\leq r\leq 21$ and $\delta\equiv 6-r\mod 8$.
\end{enumerate}
These are nonempty genera of even negative definite lattices of rank $\geq 12$. Let $g$ be one of the previous genera. In what follows, we explain how one can algorithmically enumerate all isometry classes in $g$.

\subsection{Kneser's Neighbour Algorithm}
Throughout, let $L$ be a lattice in the given genus $g$, let $V$ be the ambient quadratic space associated to $g$ ($L\subseteq V$) and let $p$ be a prime number not dividing $\text{det}(L)$. 
\begin{definition}[{{{\cite[\S 28]{KneserMartin2002QF},\cite[\S 5]{kir16}}}}]\label{defA1}
	Let $L'$ be a full rank lattice in $V$. Then $L'$ is called a \emph{$p$-neighbour} of $L$ if 
	$$L/(L\cap L') \cong L'/(L\cap L')\cong \mathbb{F}_p$$
	as $\mathbb{F}_p$-vector spaces.
\end{definition}

Let $b$ the underlying nondegenerate symmetric bilinear form of $L$. We call an element $y\in L$ \textbf{$p$-admissible} if  $y\in L\setminus pL$ and $b(y,y)\in 2p^2\mathbb{Z}$ . 

\begin{theorem}[{{{\cite[\S 1]{RSP}, \cite[\S 28]{KneserMartin2002QF}}}}]\label{theoapp}
	Let $y\in L$ be $p$-admissible and let
	\begin{equation}\label{neighbour}
		L(y) := L_y + \mathbb{Z}\frac{1}{p}y\;\;\text{where}\;\;L_y = \left\{ x\in L\; \mid \; b(x,y)\in p\mathbb{Z}\right\}.
	\end{equation}
	Then $L(y)$ is a $p$-neighbour of $L$. Moreover, $L(y) \in g$ and any $p$-neighbour of $L$ lying in $g$ is of the form $L(y)$ for some $p$-admissible vector $y\in L$.
\end{theorem}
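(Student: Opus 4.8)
The plan is to prove the three assertions in turn. Throughout I would use that $p\nmid 2\det(L)$ implies two things: the bilinear form $b$ is nondegenerate on $L/pL$, and $pL^{*}\cap L=pL$. Granting these, the map $L\to\bF_p$, $x\mapsto b(x,y)\bmod p$, is a \emph{nonzero} linear functional precisely because $y\notin pL$; hence $L_y$, its kernel, has index $p$ in $L$. Since $y$ is $p$-admissible we have $\tfrac1p y\notin L$, and $b(y,y)\in 2p^{2}\bZ\subseteq p\bZ$ gives $y\in L_y$; a short check then shows $\bZ\tfrac1p y\cap L_y=\bZ y$, so $[L(y):L_y]=p$, and $L\cap L(y)=L_y$ (if $z=x+\tfrac kp y\in L$ with $x\in L_y$ and $p\nmid k$, then writing $1=ak+bp$ one gets $\tfrac1p y\in L$, a contradiction). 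Therefore $L/(L\cap L(y))\cong L(y)/(L\cap L(y))\cong\bF_p$, i.e.\ $L(y)$ is a $p$-neighbour of $L$.

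Next I would check that $L(y)\in g$. That $L(y)$ is integral, and even since $L$ is, is a direct computation: one pairs $x+\tfrac kp y$ with $x'+\tfrac{k'}p y$ for $x,x'\in L_y$ and uses $b(L_y,y)\subseteq p\bZ$ together with $b(y,y)\in 2p^{2}\bZ$. From $[L:L_y]=[L(y):L_y]=p$ we get $\det(L(y))=\det(L)$, so $L(y)$, like $L$, is unimodular over $\bZ_p$. For every prime $q\neq p$, flatness of $\bZ_q$ over $\bZ$ and $\bF_p\otimes\bZ_q=0$ give $L(y)\otimes\bZ_q=L_y\otimes\bZ_q=L\otimes\bZ_q$ inside $V\otimes\bQ_q$, hence an isometry there; the real place contributes the same signature. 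At $p$, the lattices $L\otimes\bZ_p$ and $L(y)\otimes\bZ_p$ are unimodular of the same rank inside the common quadratic space $V\otimes\bQ_p$, hence isometric since $p$ is odd. Thus $L(y)$ lies in the genus of $L$.

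For the converse, let $M$ be a $p$-neighbour of $L$ with $M\in g$, and set $N:=L\cap M$, so $[L:N]=[M:N]=p$ and $M/N$ is cyclic of order $p$. Choose $m\in M$ whose image generates $M/N$; then $M=N+\bZ m$ and $pm\in N$. Put $y:=pm\in N\subseteq L$. First, $y\notin pL$: otherwise $\tfrac1p y=m\in L$, forcing $M\subseteq L$, which is never a $p$-neighbour of $L$. Second, $b(y,y)=p^{2}b(m,m)\in 2p^{2}\bZ$ because $M$, lying in $g$, is even; so $y$ is $p$-admissible. Finally, $N\subseteq L_y$ since $b(N,m)\subseteq\bZ$ by integrality of $M$ and hence $b(N,y)\subseteq p\bZ$, while $L_y\neq L$ because $y\notin pL$; therefore $L_y$ has index $p$ in $L$, which forces $L_y=N$, and consequently $L(y)=L_y+\bZ\tfrac1p y=N+\bZ m=M$.

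The one ingredient that is not pure index arithmetic is the local comparison at $p$ in the second step: the uniqueness up to isometry of a unimodular $\bZ_p$-lattice of given rank inside a given $\bQ_p$-quadratic space, which is available here because $p\nmid 2\det(L)$ forces $p$ to be odd; together with tracking evenness, which is exactly what the factor $2$ in the definition of $p$-admissibility secures. I expect this local comparison to be the main (and rather mild) obstacle — one may also simply invoke \cite[\S 28]{KneserMartin2002QF}. The converse, by contrast, is immediate once one notices that the correct choice is $y=p\cdot(\text{a generator of }M/(L\cap M))$.
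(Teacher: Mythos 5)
Your proof is correct. The paper offers no argument of its own for this statement — it is quoted directly from \cite[\S 1]{RSP} and \cite[\S 28]{KneserMartin2002QF} — and what you have written is precisely the standard argument behind those references: index computations using $p\nmid 2\det(L)$ (so that $pL^{*}\cap L=pL$ and the functional $x\mapsto b(x,y)\bmod p$ is nonzero) for the neighbour property, integrality/evenness plus $\det(L(y))=\det(L)$ and the uniqueness of unimodular $\mathbb{Z}_p$-lattices for odd $p$ to get $L(y)\in g$, and the choice $y=pm$ with $m$ generating $M/(L\cap M)$ for the converse. All the delicate points (why $\mathbb{Z}\tfrac1p y\cap L_y=\mathbb{Z}y$, why $L\cap L(y)=L_y$, why evenness of $M$ is needed for admissibility of $y$) are handled correctly, so nothing is missing.
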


We define $C(g)$ to be the set of all the (finitely many) isometry classes of lattices in $g$, $E_p(g)$ to be the set
\[E_p(g):=\left\{([L], [L'])\in C(g)^2\;\mid\; L,L'\;\text{are  $p$-neighbours}\right\}\]
and, finally, $\text{Kne}_p(g):= (C(g), E_p(g))$ to be the \textbf{$p$-neighbour graph} of $g$ (see \cite[page 742]{scharlau}). 

It follows that $\text{Kne}_p(g)$ consists of finitely many nodes given by $C(g)$, and two nodes $[L], [L']$ are connected by an edge if and only if $([L],[L'])\in E_p(g)$. The number of connected components of $\text{Kne}_p(g)$ does not depend on $p$, and each of them is an union of what are called \emph{proper spinor genera} of $g$ (see \cite[\S 102]{QuadForm} for the notion of spinor genera). One can show algorithmically (see \cite[Corollary 5.4.10]{kir16}) that the 12 genera we aim to enumerate consist of one proper spinor genus, meaning that their respective $p$-neighbour graphs are connected, for any prime number $p$. So in what follows, we assume that genera consists of only one proper spinor genus.

The neighbour construction (Eq.\hspace{-3pt} \ref{neighbour}) offers a pratical way to construct representatives of isometry classes in $g$ and it is the starting point for Kneser's neighbour algorithm (see \cite[Satz 28.4]{KneserMartin2002QF}). In particular, this algorithm lets us reconstruct $C(g)$ starting from a single isometry class $[L]$ and any prime number $p\nmid \text{det}(L)$.
\begin{remark}
	Constructing a first representative lattice $L$ of a given genus $g$ is feasible but nontrivial: we refer to \cite[\S 3.4, \S 3.5]{kir16} for more details.
\end{remark}
The idea is the following: we start by constructing all the $p$-neighbouring isometry classes $[L']$ of $[L]$ obtained from the neighbour construction. Then, we iterate this process to all the new isometry classes obtained until we have exhausted the $p$-neighbour graph, i.e. we cannot construct any new isometry class from the ones already obtained.  Note that an isometry class can be reached by several different other ones in $\text{Kne}_p(g)$: one has to compare any new neighbour lattice to representatives of the isometry classes already computed. The following result ensures that this process allows us to recover $C(g)$.
\begin{theorem}[{{{\cite[\S 1 (ix)]{RSP}}}}]
	If the genus $g$ consists only of one (proper) spinor genus, $\rank L \geq 3$ and $L\otimes \mathbb{Q}_p$ is isotropic, then the previous procedure returns representatives of all isometry classes in $g$.
\end{theorem}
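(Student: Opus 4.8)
The plan is to reduce the claim to the connectivity of the $p$-neighbour graph $\text{Kne}_p(g)$ under the stated hypotheses. Once connectivity is established, correctness of Kneser's algorithm is immediate: by Theorem \ref{theoapp} each step of the algorithm produces exactly the $p$-neighbours of the current lattice that lie in $g$, so the algorithm explores the connected component of the starting class $[L]$; if the graph is connected this component is all of $C(g)$, and the algorithm terminates because $C(g)$ is finite. Since, by hypothesis, $g$ is a single proper spinor genus, and since every connected component of $\text{Kne}_p(g)$ is a union of proper spinor genera, it suffices to show that any two lattices lying in one and the same proper spinor genus are joined by a chain of $p$-neighbours.

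For this I would work adelically. Set $V=L\otimes\mathbb{Q}$. The lattices in $g$ form one orbit of $O(V)(\mathbb{A}_f)$ acting on $L$ through its completions, with stabiliser $\prod_q O(L_q)$; the isometry classes in $g$ are the double cosets of $O(V)(\mathbb{Q})$ and $\prod_q O(L_q)$, and two lattices lie in the same proper spinor genus exactly when the connecting adelic element can be chosen in $O(V)(\mathbb{Q})$ times the subgroup of adelic elements of trivial spinor norm times $\prod_q O(L_q)$. The first key computation is that a $p$-neighbour $L(y)$, with $y$ a $p$-admissible vector, is of the form $\widehat h\cdot L$ for an adelic element $\widehat h$ equal to the identity at every prime $q\neq p$ and whose $p$-component is, up to an element of $O(L_p)$, the hyperbolic dilation $y\mapsto p^{-1}y$, $z\mapsto pz$ on a hyperbolic plane $\langle y,z\rangle\subseteq V\otimes\mathbb{Q}_p$; such a $z$ exists because $L_p$ is unimodular ($p\nmid 2\det L$). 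Writing this dilation as a product of two reflections shows that its spinor norm is the class of $p$ in $\mathbb{Q}_p^\times/(\mathbb{Q}_p^\times)^2$, so a composition of an even number of $p$-neighbour moves is realised by an adelic element supported at $p$ and of trivial spinor norm.

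Next I would invoke strong approximation for the spin group. Since $\mathrm{rank}_{\mathbb{Z}}(L)\geq 3$, the group $\mathrm{Spin}(V)$ is semisimple and simply connected; since $V\otimes\mathbb{Q}_p$ is isotropic, $\mathrm{Spin}(V)(\mathbb{Q}_p)$ is non-compact. Strong approximation then applies with $S=\{p\}$, giving that $\mathrm{Spin}(V)(\mathbb{Q})$ is dense in the restricted product $\prod'_{q\neq p}\mathrm{Spin}(V)(\mathbb{Q}_q)$. Translating through the spinor-norm sequence relating $\mathrm{Spin}$, $\mathrm{SO}$ and $\mathbb{Q}^\times$, one obtains that if $L'$ lies in the same proper spinor genus as $L$ then the adelic element carrying $L$ to $L'$ may be taken to be the identity away from $p$ and of trivial spinor norm at $p$; by the previous paragraph it is then an even composition of $p$-neighbour moves, up to at most one further $p$-neighbour move needed to match it exactly, which by Theorem \ref{theoapp} still stays inside $g$. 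Hence $L$ and $L'$ are connected in $\text{Kne}_p(g)$, the graph is connected, and the algorithm is correct.

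The step I expect to be the main obstacle is the local analysis in the second paragraph: making the passage from a $p$-admissible vector $y$ to the adelic transformation $\widehat h$ rigorous for every such $y$ (choosing the complementary isotropic vector $z$ using unimodularity of $L_p$) and computing the spinor norm — this, together with the appeal to strong approximation, is exactly where the hypotheses $\mathrm{rank}_{\mathbb{Z}}(L)\geq 3$ and isotropy of $V\otimes\mathbb{Q}_p$ enter. The remaining verifications, that the constructed elements preserve $L_q$ at all $q\neq p$ and that $p$-neighbours remain in $g$, are already guaranteed by Theorem \ref{theoapp}, so once the spinor-norm computation and strong approximation are in hand the connectivity of the neighbour graph, and hence the theorem, follow formally.
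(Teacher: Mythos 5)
The paper does not actually prove this statement: it is imported wholesale from the literature (\cite[\S 1 (ix)]{RSP}) and used as a black box to guarantee that the neighbour walk exhausts $C(g)$. So there is no internal proof to compare against; what you have written is a reconstruction of the classical Eichler--Kneser argument underlying the citation, and your route is the correct one: realise a $p$-neighbour step $L\mapsto L(y)$ as an adelic isometry supported at $p$ whose $p$-component is a hyperbolic dilation on a plane $\langle y,z\rangle$ (the complementary isotropic vector $z$ exists because $L_p$ is unimodular, as $p\nmid 2\det L$), compute its spinor norm to be the class of $p$, and then use strong approximation for $\mathrm{Spin}(V)$ --- which is exactly where $\mathrm{rank}_{\mathbb{Z}}(L)\geq 3$ (simply connected semisimple) and isotropy of $V\otimes\mathbb{Q}_p$ (non-compactness at $p$) enter --- to conclude that the classes reachable from $[L]$ by chains of neighbours fill out the proper spinor genus of $L$, hence all of $C(g)$ when $g$ is a single proper spinor genus; correctness and termination of the algorithm then follow from Theorem \ref{theoapp} and finiteness of $C(g)$, as you say.

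One step of your sketch is genuinely thinner than a proof requires. After strong approximation you are reduced to two lattices of $g$ that agree at every prime $q\neq p$ and differ at $p$ by an element of $\mathrm{SO}(V\otimes\mathbb{Q}_p)$ of trivial spinor norm, and you still must show that any such pair is joined by a chain of $p$-neighbours: this needs the local factorisation of that element into hyperbolic dilations (Kneser's chain-equivalence lemma at $p$, resting on unimodularity of $L_p$ and Witt's theorem), together with the parity bookkeeping showing the resulting spinor norms $p^{\mathrm{even}}$ are trivial. Your phrase ``up to at most one further $p$-neighbour move needed to match it exactly'' is precisely where this argument is hiding and would have to be spelled out. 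A second, minor point: for $n=4$ the group $\mathrm{Spin}(V)$ need not be $\mathbb{Q}$-simple, so one should check that every simple factor is non-compact at $p$ before invoking strong approximation (harmless here, and irrelevant for the rank $\geq 12$ lattices the paper feeds into the theorem). With these two points supplied, your argument is exactly the proof of the quoted result.
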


However, classical implementations of this algorithm do not exhaust all the edges in the $p$-neighbour graph. In fact, for genera of definite lattices, there exists an invariant, called the \emph{mass}, which allows us to determine whether or not we have enumerated $C(g)$.
\begin{definition}
	Let $g$ be a genus of definite lattices. We call the \emph{mass} of $g$, which we denote $m(g)$, the following sum
	\begin{equation}
		m(g) := \sum_{[L]\in C(g)}\frac{1}{\#O(L)}.
	\end{equation}
	For any $[L]\in C(g)$, we call the term $w([L]):= \frac{1}{\#O(L)}$ the \emph{weight} of $[L]$ in $g$.
\end{definition}
Thanks to the Smith--Minkowski--Siegel formula (see \cite[\S 2, Eq. (2)]{MR965484}), one can actually compute the mass of a genus of definite lattices without enumerating its isometry classes. Therefore, while walking through $\text{Kne}_p(g)$, one can associate to each visited node the weight of the corresponding isometry class. Adding the weights of the isometry classes already found gives us information on which proportion of $C(g)$ we have enumerated. In particular, if the previous sum agrees with the mass of $g$, then all the isometry classes in $g$ have been enumerated.

\subsection{Practical Implementation}
We have explained the needed theory behind the enumeration of the genus $g$ (we refer to \cite{scharlau} for more details). Let us now make some comments on practical implementation and possible improvements. 

First of all, recall that $p$ is a prime number not dividing $\text{det}(L)$ and let $n$ be the rank of $L$. We have the following proposition:
\begin{proposition}[{{{\cite[Hilfssatz (28.7)]{KneserMartin2002QF}}}}]
	If $y, y'\in L$ are $p$-admissible and such that $[y] = [y']\in L/pL$, then $L(y) = L(y')$
\end{proposition}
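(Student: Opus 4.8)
The plan is to show that the $p$-neighbour $L(y)$ depends only on the class $[y]$ in $L/pL$, so that the neighbour construction (Eq.~\ref{neighbour}) is well-defined on $L/pL$ and there are at most $p^{n-1}$ distinct neighbours to check at each step. First I would unwind the definition: if $y' = y + pz$ for some $z\in L$, I must compare $L_y = \{x\in L\mid b(x,y)\in p\bZ\}$ with $L_{y'}$, and also compare the rational vectors $\frac1p y$ and $\frac1p y'$ modulo $L_y$.

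The key observation is that for any $x\in L$ we have $b(x,y') = b(x,y) + p\,b(x,z)$, and since $b(x,z)\in\bZ$, the congruence $b(x,y')\in p\bZ$ holds if and only if $b(x,y)\in p\bZ$. Hence $L_{y'} = L_y$ as sublattices of $L$; call this common lattice $L_0$. It then remains to check that $L_0 + \bZ\frac1p y = L_0 + \bZ\frac1p y'$. This is immediate: $\frac1p y' = \frac1p y + z$, and $z\in L$; moreover $z\in L_0$ because $b(z,y) = \tfrac1p\bigl(b(y,y)-b(y',y)\bigr)$... more directly, one checks $p\mid b(z,y)$ using $p$-admissibility, or simply notes that $y = \frac1p y \cdot p \in L_0 + \bZ\frac1p y$ forces $\frac1p y' - \frac1p y = z$ to lie in the lattice $L_0 + \bZ\frac1p y$ as soon as $z$ does — and indeed $z\in L$ with $b(z,y)\in p\bZ$ follows from $b(y,y),b(y',y)\in p\bZ$ (the latter because $y'$ is $p$-admissible, so $b(y',y') \in p\bZ$ and $b(y',y) = b(y',y') - p\,b(y',z)\in p\bZ$). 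Therefore $z\in L_0$, giving $\bZ\frac1p y' \subseteq L_0 + \bZ\frac1p y$ and, by symmetry, equality of the two lattices. Thus $L(y) = L(y')$.

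The main (and essentially only) subtlety is the bookkeeping around $p$-admissibility: one must be careful that $z = \frac1p(y'-y)$ genuinely lies in $L_0$, i.e. that $b(z,y)\in p\bZ$, which uses $b(y,y)\in 2p^2\bZ$ and the analogous condition for $y'$ to conclude $b(y'-y,y)\in p^2\bZ$, hence $b(z,y)\in p\bZ$. Once this is in place the proof is a two-line computation. I would present it exactly in that order: (1) $L_y$ depends only on $[y]$; (2) the auxiliary vector $\frac1p y$ changes by an element of $L_y$; (3) conclude $L(y)=L(y')$.
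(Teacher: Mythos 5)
The paper offers no proof of this proposition at all: it is quoted verbatim from Kneser \cite[Hilfssatz (28.7)]{KneserMartin2002QF}, so the only comparison available is with the standard argument, which is precisely the one you outline — write $y' = y + pz$ with $z\in L$, observe $b(x,y')\equiv b(x,y) \pmod p$ for all $x\in L$ so that $L_{y'}=L_y$, then show $z\in L_y$ so that $\tfrac1p y' - \tfrac1p y = z \in L(y)$, giving $L(y')\subseteq L(y)$ and equality by symmetry. That structure is correct, and you rightly note that $z\in L$ alone is not enough since $L(y)\cap L = L_y$. The one step to tighten is the divisibility $b(z,y)\in p\bZ$: the middle of your argument only establishes $b(y',y)\in p\bZ$, and since $p\,b(z,y)=b(y',y)-b(y,y)$ this yields merely $b(z,y)\in\bZ$, which is vacuous — as written that passage is circular. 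Your closing paragraph asserts the correct statement, $b(y'-y,y)\in p^2\bZ$, but does not derive it; the clean derivation is to expand the norm of $y'$: $b(y',y') = b(y,y) + 2p\,b(y,z) + p^2 b(z,z)$, so $2p\,b(y,z)\in p^2\bZ$ by the admissibility of $y$ and $y'$, hence $2b(y,z)\in p\bZ$, and since $p$ is odd (recall $p\nmid 2\det(L)$) this gives $b(y,z)\in p\bZ$. With that two-line computation inserted, your proof is complete and coincides with the expected (Kneser's) argument.
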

Therefore, in practice, one only has to enumerate isotropic lines $l$ in $L/pL \cong \mathbb{F}_p^n$ and choose any $p$-admissible lift in $L$ of a representative of $l$ to construct the corresponding neighbour lattice. The set $\mathcal{L}_{p,n}$ of lines in $\mathbb{F}_p^n$ is of cardinality $(p^n-1)/(p-1)$. In the context of this paper, where $p\geq 5$ and $n\geq 12$, enumerating isotropic elements in $\mathcal{L}_{p,n}$ at each iteration of the neighbours construction is infeasible: for instance, in the case $p=5$ and $n=15$, $\mathcal{L}_{5,15}$ is of size 7629394531. The whole process of enumerating all the isotropic lines, looking for $p$-admissible lifts, constructing the corresponding neighbours, comparing each new neighbour to the already visited nodes in $C(g)$ and computing the weight of every new visited node can take, in this case, up to 5 hours. If we do this process multiple times for different isometry classes, depending on the size of $C(g)$, already for rank 15 the enumeration can take several days. Thus, one could use in complement the second part of \cite[Hilfssaftz (28.7)]{KneserMartin2002QF} stating that if $y,y'\in L$ are $p$-admissible and if there exists $\phi\in O(L)$ such that $\phi(y) = y'$, then $L(y)$ and $L(y')$ are isometric. In other words, walking through $\text{Kne}_p(g)$ only requires to lift representatives of $O(L)$-orbits of isotropic elements in $\mathcal{L}_{p,n}$ since two isotropic lines in a same orbit give rise to isometric neighbour lattices, which correspond to the same class in $C(g)$. However, still in the context of this paper, even though $\#O(L)$ is finite for a definite lattice $L$, it turns out that the available algorithms (to our knowledge) to compute $O(L)$-orbits in $\mathcal{L}_{p,n}$ require either too much memory space or too much computation time, because of the high ranks and determinants of the lattices to find.

\subsection{Further improvements}
A way to bypass the complexity matters enunciated in the previous section, is to add randomisation. We iteratively proceed as follows: we choose a random isometry class among those already visited in $C(g)$, a random prime number $p$ not dividing $\text{det}(L)$ and we collect a large sample $N >> 0$ of random elements in $\mathcal{L}_{p,n}$ to construct neighbour lattices in $\text{Kne}_p(g)$ from the isotropic lines among them. Selecting different prime numbers allow us to enumerate $C(g)$ by finding neighbour lattices in different neighbour graphs in parallel. This can help if some lattices are connected by fewer edges in $\text{Kne}_p(g)$ than in $\text{Kne}_q(g)$ for different primes $p$ and $q$. The enumeration of genera using Kneser's neighbour method has been implemented on Hecke \cite{nemo} and we adapt this code with the random search just mentioned.

A final issue one can face with this randomised algorithm is that random walks in $\text{Kne}_p(g)$ can eventually lead to vain iterations. In this case, there might be only a little number of nodes which have not been visited yet and the probability to find them via random construction of neighbour lattices is low. This is the case, for instance, for isometry classes of lattices with relatively small weight:  their isometry group has big cardinality and therefore, there are few edges connecting to the corresponding node in $\text{Kne}_p(g)$ for all small primes $p$. It is possible, at this stage, to complete the enumeration of $C(g)$ by hand. To do so, let $C_{al}(g)$ be the list of representatives of isometry classes in $g$ which have been already computed, and let 
\[ m := m(g) - \sum_{[L]\in C_{al}(g)}w([L])\in \mathbb{Q}.\]
Let $q$ be the biggest prime number dividing the denominator of $m$. This prime $q$ must divide the order of the isometry group of a class in $C(g)\setminus C_{al}(g)$, and thus such isometry class contains a lattice with at least one isometry of order $q$. In the paper \cite{bt21}, Brandhorst and Hofmann have developed methods to compute, given a genus $g$ and a prime number $q$, pairs $(L,f)$ consisting of a lattice $L\in g$ and an isometry $f\in O(L)$ of order $q$. Their algorithms have been implemented using a combination of Sage \cite{sagemath}, GAP \cite{GAP4}, Magma \cite{magma} and PARI \cite{PARI98} (see also \cite[QuadFormWithIsom]{OSCAR} for a more recent and centralized implementation by the second author). Running such codes, we can find new lattices in $g$ not represented by any element of $C_{al}(g)$, and having an isometry of order $q$. Subtracting the weights of their respective isometry classes to $m$, we should eventually be able to clear out $q$ from the divisors of the denominator of $m$. If $m$ is still nonzero, we keep going with the next biggest prime $q'$ dividing the denominator of $m$, and so on until $m=0$. If $m$ happens to be an integer after some iteration, we continue with the largest prime number smaller than the one previously tried.

\subsection{Results and verification}
Combining both the enumeration by random searches and the construction of lattices with an isometry of a given order, we enumerate each of the 12 genera mentioned at the beginning of this Appendix. The lattices we have enumerated are available in the folder ``enumeration" in \cite{marquand_lisa_2023_7528193}. In the latter, each genus $g$ is given as a file, and each line of this file corresponds to the Gram matrix of a representative of an isometry class in $g$. The different ``reading files" made available allow one to read those genera on Hecke \cite{nemo}, Magma \cite{magma}, Oscar \cite{OSCAR} and Sage \cite{sagemath}.\smallskip

In order to verify our results, we refer to the folder ``verification" in \cite{marquand_lisa_2023_7528193}. In this folder, we provide a notebook providing step-by-step Hecke instructions to verify:
\begin{itemize}
    \item that each genus is correctly enumerated (see below for a brief explanation);
    \item the number of vectors of square $-2$, or of square $-6$ and divisibility at least 3 in each lattice.
\end{itemize}
Note that in this folder, the genera are saved differently. Now, each genus $g$ is given by a folder, and each file in this folder correspond to a representative of an isometry class in $g$. In each such file, we remember the Gram matrix of the lattice $L$ as well as the order of its orthogonal group $O(L)$.

\begin{remark}\label{rem: order isometry}
    Generators for the orthogonal groups of definite lattices are effectively computable \cite{plesken-souvignier}. However, such computations can be expensive for large rank lattices. In our enumeration process, which was originally purely written on Hecke, we made use of Magma for computing orthogonal group and testing isometry of lattices. For one's convenience, we have precomputed the order of the orthogonal groups of every lattices in our database using Magma \cite{magma} and stored it.
\end{remark}
In order to check whether the genera in \cite{marquand_lisa_2023_7528193} are correctly enumerated, one can proceed in the following way. 

Given a list $S$ of lattices, and a genus $g$, one ensures that $S$ is a complete set of representatives of isometry classes in $g$ without repetition, by
\begin{enumerate}
    \item checking that for all $L\in S$, the genus of $L$ is $g$;
    \item if one denotes $m$ the mass of $g$, computing that
    \[\sum_{L\in S}\frac{1}{\#O(L)} = m;\]
    \item testing whether lattices in $S$ are pairwise non isometric.
\end{enumerate}
In order to test whether two given definite lattices in a same genus are isometric, one can use algorithms of Plesken--Souvignier \cite{plesken-souvignier}. This can be challenging for lattices of high rank. Nonetheless, there is actually a way of making this procedure less challenging, by working with isometry invariants.

In fact, for instance, two lattices whose orthogonal groups have different order cannot be isometric. Hence, up to choosing a certain list of isometry invariants which are not hard to compute, one can definitely speed up step (3) and only call Plesken--Souvignier algorithm for comparing lattices with similar invariants. In our case, we have used:
\begin{itemize}
    \item the order of the orthogonal group of the lattice;
    \item the minimum norm, in absolute value, of a vector in the lattice; 
    \item the \textbf{kissing number} $k$ of the lattices, which is equal to the number of vectors of minimum (absolute) norm;
    \item the decomposition of the \textbf{root sublattice} of the lattice (see \cite{ebeling});
    \item the numbers of vectors of norm $-4$, and $-6$ with divisibility in $3\bZ$.
\end{itemize}
More details in \cite{marquand_lisa_2023_7528193}.

\bibliography{OG10--new}

\begin{table}[ht!]
	\caption{Case 3, $r=12$}
	\label{case3rank12}
	\resizebox{!}{0.55\textheight}{\begin{tabular}{ c}
		\toprule
		\textbf{Lattices in the isometry class of Corollary \ref{cases to enumerate} with $(r,a,\delta)=(12, 12, 1)$}\\
		\midrule\\
		\addlinespace[-2ex]
		$L_{12,12}^1 = \begin{bmatrix} 
			-2 & & & && & &&& & &\\ 
			&-2 & && & & & & & & & \\
			&& -2& & & & & & & & & \\
			& & &-2& & & & & & &  &\\
			&& & & -2& & & & & & & \\
			&& & & &-2& & & & & & \\
			& & & & & &-4& -2& -2& -2& -2& -2\\
			&& & & & & -2& -4& -2& -2& & \\
			& && & & & -2& -2& -4& & & -2\\
			& & & & & & -2& -2& & -4&  &\\
			& &&  & & &-2& & &  &-4& -2\\
			&&& & & & -2& &-2& & -2& -4 
		\end{bmatrix}$ \\
		\addlinespace[1.5ex]
		$L_{12,12}^2= \begin{bmatrix} 
			-2 & & & & & & & & & & &\\ 
			&-2&  & & && & &&  && \\
			&&-4& 2& -2 &-2 &2& -2 &2 &2&2& 2\\
			&  &2 &-4 & && &&  & &-2& -2\\
			&  &-2 && -4 && 2 &-2 &2 &2 &2& 2\\
			&&-2 &&& -4 && &  &&  &\\
			&& 2 &&2 & &-4& 2 &-2& -2 &-2 &-2\\
			& & -2 && -2 & &2& -4& 2 &2&  &\\
			& & 2 & &2 & &-2 &2& -4& -2& -2 &-2\\
			& & 2& & 2 &&-2 &2 &-2 &-4& &-2\\
			& & 2& -2& 2&  &-2& & -2 & &-6& -4\\
			& & 2& -2 &2&  &-2&  &-2& -2 &-4& -6
			
		\end{bmatrix}$\\
		\addlinespace[1.5ex]
		$G_{12}= \begin{bmatrix} 
			-4 &2 &-2 &-2 &-2& -2 &2& -2 &2& 2 &-2 &-2\\
			2 &-4 &2 && 2& & -2 &2& -2& -2 &2 &2\\
			-2 &2 &-4 &-2 &-2& -2& 2& -2 &2& 2 &-2 &-2\\
			-2 &&-2 &-4& -2& -2& & & & & & &\\
			-2 &2 &-2& -2& -4& -2 &2& -2& 2& 2 &-2& -2\\
			-2 && -2 &-2& -2& -4& 2& -2& 2& 2 &-2 &-2;\\
			2& -2& 2 && 2& 2& -4& 2& -2& -2 &2 &2\\
			-2 &2& -2& & -2& -2 &2 &-6& 4 &4& -2& -4\\
			2 &-2& 2 && 2 &2& -2& 4 &-6& -2 &4& 4\\
			2& -2 &2& & 2 &2 &-2& 4& -2& -6& 2& 2\\
			-2& 2 &-2 &&-2 &-2 &2 &-2 &4 &2& -6& -2\\
			-2 &2& -2 && -2& -2 &2& -4& 4& 2 &-2& -6
		\end{bmatrix}$\\
		\addlinespace[1.5ex]
		$L_{12,12}^4= diag(-2, -2, -2, -2, -2, -2, -2, -2, -2, -2, -2, -6)$\\
		%	\begin{bmatrix} 
			%		-2 & & & & & & & & & && \\
			%		 & -2 & & & & & & & & && \\
			%		  & & -2& & & & & & & & & \\
			%		   & & & -2 &&& & & & & & \\
			%		    & & & & -2& & & &  & & & \\
			%		    & & & & & -2& & &  & & &\\
			%		    & & & & & & -2& & & & & \\
			%		     & & & & & & & -2& & & & \\
			%		     & & & & & & & & -2& & &\\
			%		     & & & & & & & & & -2& & \\
			%		      & & && & & & & & & -2 &\\
			%		      & & & & & & & & & & & -6
			%	\end{bmatrix}$\\
		\addlinespace[1.5ex]
		$L_{12,12}^5= \begin{bmatrix} 
			-2 &&&&&&&&&&&\\
			& -4 &-2& -2& 2 &-2& 2& -2& 2&&& \\
			&-2 &-4& -2 && -2 &2&& & & &  \\
			& -2& -2& -4& 2& -2& 2 &-2& 2& &&\\
			& 2& & 2& -4& 2& & 2& -2& & & \\
			& -2& -2& -2& 2& -4& 2& -2& 2& & & \\
			& 2& 2& 2& & 2& -4& 2& -2& & & \\
			& -2& & -2& 2& -2& 2& -4& 2& & & \\
			&2 && 2& -2& 2& -2& 2& -4& & & \\
			& & & & & & & & & -2& & \\
			& & & & & & & & & & -2& \\
			& & & & & & & & & & & -6
		\end{bmatrix}$\\
		\bottomrule
	\end{tabular}}
\end{table}

 \end{document}